\documentclass{amsart}
\usepackage[margin=1in]{geometry}
\usepackage{amsmath}
\usepackage{amssymb}
\usepackage{amsthm}
\usepackage{mathrsfs}
\usepackage{enumerate}
\usepackage{tikz}
\usetikzlibrary{decorations.pathreplacing}
\newtheorem{thm}{Theorem}[section]
\newtheorem{lem}[thm]{Lemma}

\theoremstyle{definition}
\newtheorem{dfn}[thm]{Definition}

\newtheorem*{prb*}{Problem}

\newtheorem{rmk}[thm]{Remark}

\usepackage[indent]{parskip}

\numberwithin{equation}{section}

\usepackage{hyperref}

\providecommand{\al}{\alpha}

\providecommand{\de}{\delta}

\providecommand{\e}{\varepsilon}
\providecommand{\f}{\varphi}

\providecommand{\N}{\mathbb{N}}

\providecommand{\R}{\mathbb{R}}

\providecommand{\E}{\mathbb{E}}

\providecommand{\cD}{\mathcal{D}}

\providecommand{\cQ}{\mathcal{Q}}
\providecommand{\cR}{\mathcal{R}}
\providecommand{\cS}{\mathcal{S}}

\DeclareMathOperator{\Fav}{Fav}
\DeclareMathOperator{\FavP}{\Fav_{\Phi}}

\title{Favard length and generalized projections}

\author{Izabella  {\L}aba }
\address{Mathematics Department, The University of British Columbia}
\email{ilaba@math.ubc.ca }

\author{Alex McDonald}
\address{Mathematics Department, Kennesaw State University, Marietta, GA}
\email{amcdon79@kennesaw.edu}

\author{Krystal Taylor}
\address{Mathematics Department, The Ohio State University, Columbus, OH}
\email{taylor.2952@osu.edu}

\begin{document}
\begin{abstract}
We investigate generalized Favard lengths associated to smooth families
of nonlinear projections. Under suitable regularity and
transversality assumptions, we prove that generalized projections
are locally comparable to orthogonal projections on sufficiently
small scales. This yields a comparison principle that transfers
quantitative upper bounds for classical Favard length to broad
classes of nonlinear projection families.

As a consequence, known upper bounds for the Favard length of
purely unrectifiable self-similar $1$-sets 
yield corresponding upper bounds for their
generalized Favard lengths. We also prove that the union of
circles with centers in a purely unrectifiable self-similar
$1$-set has Lebesgue measure zero whenever the radii vary
sufficiently slowly. More generally, the same method yields
measure estimates for unions of curves arising from suitable
level-set families.
\end{abstract}

\maketitle


\section{Introduction}
This paper contributes to the study of geometric properties that remain invariant under nonlinear projection methods, as well as to Kakeya-type problems for curved objects. We begin with the connection between classic orthogonal projections and rectifiability. 
\subsection{Orthogonal projections $\&$ rectifiability}
Orthogonal projections are fundamental tools in geometric measure theory and fractal geometry. They provide lower-dimensional views of a set while preserving geometric information and play a central role in the study of dimension, measure, and rectifiability. A cornerstone of the subject is Marstrand's projection theorem, which exhibits a threshold at dimension $1$: if a planar set has Hausdorff dimension at most $1$, then its dimension is preserved under projection onto lines in almost every direction; whereas if its Hausdorff dimension exceeds $1$, then its projection has positive Lebesgue measure in almost every direction \cite{Mar54,Mattila2015}. Projection theorems of this type reveal deep connections between geometric measure theory, harmonic analysis, and incidence geometry.

Closely related to the critical dimension $1$ in Marstrand's theorem is the \emph{Favard length}, which measures the average length of orthogonal projections and plays a central role in the study of rectifiability.
For a Borel set
$E \subset \mathbb{R}^2$, it is defined by
\[
\Fav(E)
=
\int_0^\pi |\pi_\theta(E)|\, d\theta,
\]
where $\pi_\theta$ denotes orthogonal projection onto the line making angle $\theta$ with the $x$-axis. Equivalently, up to normalization, $\Fav(E)$ is the probability that a long needle dropped at random in the plane intersects the set $E$. This geometric interpretation traces back to Buffon's famous needle problem and provides a natural connection between projections and geometric probability \cite{Mat95}.

Favard length detects rectifiability and plays a role in the quantitative study of rectifiability. A classical theorem of Besicovitch asserts that a set of positive and finite length is purely unrectifiable if and only if its Favard length vanishes \cite{Bes39,Mat95}. Quantitative versions of this phenomenon have attracted significant attention over the last several decades. Given a fractal set $E$, one studies the decay of the Favard length of finite approximations $E_n$ as $n\to\infty$. The prototypical example is the four-corner Cantor set $K$, whose $n$-th approximation $K_n$ consists of $4^n$ squares of side length $4^{-n}$. Besicovitch's theorem implies that $\Fav(K_n)\to 0$, but determining the precise rate of decay remains a major open problem.

Substantial progress has nevertheless been achieved. Peres and Solomyak established the first explicit quantitative upper bounds \cite{PS02}. Later, Nazarov, Peres, and Volberg obtained the first power-law estimate
for the 4-corner set,
representing a breakthrough in the subject \cite{NPV10}. Since then, further developments have been obtained for a variety of self-similar constructions, including rational product Cantor sets, Sierpi\'nski-type constructions, and other classes of planar self-similar sets
\cite{BLV14,BoV12,LM22,LZ10}. These works collectively reveal a meaningful interplay between harmonic analysis, combinatorial geometry, self-similarity, and arithmetic structure.

\subsection{Nonlinear projections $\&$ generalized Favard length} 

The Favard length problem has also inspired a number of
nonlinear variants in which orthogonal projections are
replaced by more general projection families. Examples
include radial projections, visibility-type quantities,
projection operators associated to curves, and more general
transversal families of nonlinear projections
\cite{BongersTaylor2023,Orp19,PeresSchlag}.
Such objects arise naturally in geometric measure theory
and often encode geometric information.

A notable example is the Favard curve problem studied by
Cladek, Davey, and Taylor \cite{CDT21}, where orthogonal
projections are replaced by projection families associated
to translates of a fixed curve. The authors established
upper and lower bounds on the rate of decay of the Favard
curve length of finite approximations to the four-corner
Cantor set. More generally, recent work on transversal
families of nonlinear projections suggests that many
projection-theoretic phenomena persist beyond the linear
setting \cite{BongersTaylor2023}.

In the current article, we investigate generalized Favard lengths associated to smooth families of nonlinear projections. Let
\[
\Phi_\alpha: U\to\R,\ \alpha\in I,
\]
be a smooth family of projection maps, and define
\[
\FavP(E)
=
\int_I |\Phi_\alpha(E)|\,d\alpha.
\]
The classical Favard length is recovered when $\Phi_\alpha$ are orthogonal projections onto lines. Our goal is to understand how generalized Favard lengths compare to the classical Favard length and to what extent estimates from the linear setting persist for nonlinear projection families. Further, we investigate applications to large unions of circles and other curved objects parameterized by  purely unrectifiable sets.

\subsection{Local comparison - our main result $\&$ key difficulties}

The main contribution of this paper is a local comparison principle
showing that, under suitable regularity and transversality assumptions,
nonlinear projection families are quantitatively comparable to
orthogonal projections on sufficiently small scales.
The key observation underlying this comparison principle is that smooth
projection families appear linear at sufficiently small scales.

While our arguments are inspired in part by ideas arising in the study of Favard curve length \cite{CDT21}, the setting considered here is considerably more general. In \cite{CDT21}, the fibers are translates of a fixed curve, whereas in the present setting they may vary with both the projection parameter and the variable location. Consequently, one no longer has a global geometric description of the fibers, and additional work is needed to obtain the uniform geometric control required for comparison with orthogonal projections.  Our approach is to show that, on sufficiently small scales, the fibers may be efficiently contained in thin rectangles whose long sides are approximately tangent to the fibers. This provides the uniform geometric control needed to compare nonlinear and orthogonal projections.

The primary strength of this perspective is its flexibility. Once the local comparison principle has been established, quantitative Favard-length estimates from the classical projection setting can be transferred to a broad class of nonlinear projection families. This allows us to obtain new applications for generalized projections, including results for self-similar sets, random constructions, and unions of curves.

\subsection{Applications of our main result}
As an application of our comparison principle, we show that quantitative upper bounds for the classical Favard length yield corresponding upper bounds for generalized Favard lengths. 
In particular, known estimates for the four-corner Cantor set and related self-similar constructions, as well as upper bounds for related probabilistic constructions, transfer directly to the nonlinear setting considered here. 

 A second application concerns unions of curves arising from level-set
families. We prove a general theorem (Theorem \ref{thm-Gfunction}) showing that the local comparison
principle can be used to obtain measure estimates for unions of curves
whose geometry is encoded by a suitable projection family. As concrete
examples, we study unions of circles with slowly varying radii and
families of ellipses with slowly varying parameters. In particular, Theorem \ref{thm:unioncircles}  shows that a union of circles with centers in a purely unrectifiable
self-similar set has Lebesgue measure zero whenever the radii vary
sufficiently slowly.

\subsection{Notation and Preliminaries}
\label{notation}

By ``direction'' we mean an element of the projective line $\mathbb{P}^1$, i.e., an element of the circle with antipodal points identified.  The direction of a non-zero vector $v\in\R^2$ is the equivalence class of $\frac{v}{|v|}$ under this identification, and the direction of a line segment $L$ is the direction of $x-y$ for any distinct $x,y\in L$.  If $R$ is a rectangle which is not a square, the direction of $R$ is the direction of the longer sides.  We will also identify directions with elements of $\R/\pi\mathbb{Z}$ in the usual way, so that $\theta\in\R$ is identified with the direction $(\cos\theta,\sin\theta)$. We will also write $\theta^\perp=\theta-\pi/2$, corresponding to the direction $(\sin\theta,-\cos\theta)$.

For $E\subset \R^d$, we let $B_\e(E)$ denote the $\e$-neighborhood of the set $E$.  In particular, when $E=\{x\}$ is a single point, $B_\e(x)$ is the ball of radius $\e$ centered at $x$.

If $A$ and $B$ are two quantities, we write $A\lesssim B$ if $A\leq CB$ for some constant $C>0$ which may depend on $\Phi$, $U$, $V$, and $I$ (defined in Theorem \ref{thm: mainlocal}) but are independent of $x$, $\alpha$, small parameters such as $\delta$, and the choices of various squares and rectangles appearing in our results. We also write $A\approx B$ if we have both $A\lesssim B$ and $B\lesssim A$.

\section{Main Results}


We begin by defining the nonlinear analogue of Favard length that will be studied throughout the paper.

\begin{dfn}[Generalized Favard length]
Let $I\subset \R$ be an open bounded interval, and let $U\subset \R^2$ be an open set.  For $\Phi:I\times U\to \R$ and $\al\in I,x\in U$, let
\[
\Phi_\al(x)=\Phi(\al,x).
\]
For $E\subset U$, define the \textbf{generalized Favard length} of $E$ with respect to $\Phi$ to be
\[
\FavP(E)=\int_I |\Phi_\al(E)|d\al.
\]
\end{dfn}
The classical Favard length $\Fav(E)$ is a special case, with
\[
\Fav(E)=\int_0^\pi |\pi_\alpha(E)|d\alpha,
\]
where $\pi_\alpha(x) =x_1\cos{\alpha}+ x_2\sin{\alpha}$ for $x=(x_1,x_2)\in\R^2$. Thus $\FavP(E)=\Fav(E)$ when $\Phi_\alpha=\pi_\alpha$ and $I=[0,\pi]$.

\subsection{Local comparison of projection families}
The main goal of the paper is to compare generalized Favard lengths with the classical Favard length. Our first results, building on
the proof methods introduced in Proposition 6 and Lemma 7 in \cite{CDT21}, offer a quantitative local comparison. We start with an upper bound.

\begin{thm}[Local comparison of linear and generalized Favard lengths]
\label{thm: mainlocal}
Let $I\subset \R$ be an open bounded interval, and $U\subset \R^2$ an open set.  Let $\Phi\in C^2(I\times U)$ be a real-valued function such that the following conditions hold uniformly over $\al\in I,x\in U$:
\begin{equation}
\label{gradient}
|\nabla \Phi_\al(x)|\approx 1
\end{equation}
\begin{equation}
\label{seconddv}
|\partial_i\partial_j\Phi_\al(x)|\lesssim 1 \textup{ for all } i,j=1,2.
\end{equation}
and
\begin{equation}
\label{jacobian}
\left|
\det
\begin{pmatrix}
\partial_1 \Phi_\al(x) & \partial_\al\partial_1 \Phi_\al(x) \\
\partial_2 \Phi_\al(x) & \partial_\al\partial_2 \Phi_\al(x)
\end{pmatrix}
\right|\approx 1.
\end{equation}
Let $V\subset U$ be a fixed compact set.
Then, there exists $\de_0>0$ (depending on $\Phi,V,I$) such that the following holds.  If $Q\subset V$ is a compact square of side length $\de<\de_0$ and $E\subset Q$ is a union of finitely many compact squares of side length $\de^2$, then
\begin{equation}\label{eq-compare-squares}
\FavP(E)\lesssim \Fav(E)
\end{equation}
with the implicit constant independent of $Q$, $E$, and $\delta$.
\end{thm}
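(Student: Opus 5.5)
Our plan is to linearize $\Phi_\al$ on the square $Q$ and compare each fiber-projection with an orthogonal projection in a direction that depends smoothly and monotonically on $\al$. Fix the center $x_Q$ of $Q$. For each $\al\in I$ let $\theta(\al)\in\R/\pi\mathbb{Z}$ be the direction of $\nabla\Phi_\al(x_Q)$, so that $\pi_{\theta(\al)}$ is the orthogonal projection whose level lines are parallel to the tangent of the fiber $\{\Phi_\al=\Phi_\al(x_Q)\}$ at $x_Q$. By \eqref{gradient} and $C^2$ regularity, Taylor expansion gives, for $x\in Q$,
\[
\Phi_\al(x)=\Phi_\al(x_Q)+|\nabla\Phi_\al(x_Q)|\,\pi_{\theta(\al)}(x-x_Q)+O(\de^2),
\]
using \eqref{seconddv} to bound the remainder by $C|x-x_Q|^2\lesssim\de^2$. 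Equivalently, each fiber of $\Phi_\al$ meeting $Q$ is contained in a rectangle of dimensions $\approx\de\times\de^2$ in direction $\theta(\al)^\perp$.

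From this we get a pointwise-in-$\al$ comparison. Since $E$ is a union of $\de^2$-squares, $\pi_{\theta(\al)}(E)$ is a finite union of intervals of length $\gtrsim\de^2$. If $\Phi_\al(x)\in\Phi_\al(E)$, then $x$ lies within $O(\de^2)$ of a point of $E$ in the direction $\nabla\Phi_\al(x_Q)$ after rescaling, so
\[
\Phi_\al(E)\subset \Phi_\al(x_Q)+|\nabla\Phi_\al(x_Q)|\cdot B_{C\de^2}\big(\pi_{\theta(\al)}(E-x_Q)\big).
\]
Thickening by $C\de^2$ multiplies measure by at most a constant, because each component interval of $\pi_{\theta(\al)}(E)$ already has length $\ge \de^2\min(|\cos|,|\sin|)$-type size. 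More precisely, I would argue that the projection of any $\de^2$-square has length $\ge\de^2$, so the $C\de^2$-neighborhood of a union of such intervals has measure $\le(1+2C)|\pi_{\theta(\al)}(E)|$. Together with \eqref{gradient} this gives $|\Phi_\al(E)|\lesssim|\pi_{\theta(\al)}(E)|$, uniformly in $\al$.

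Next, integrate in $\al$ and change variables $\theta=\theta(\al)$. Writing $\nabla\Phi_\al(x_Q)=r(\al)(\cos\theta(\al),\sin\theta(\al))$, we have $r^2\theta'(\al)=\det(\nabla\Phi_\al,\partial_\al\nabla\Phi_\al)(x_Q)$. By \eqref{jacobian} and \eqref{gradient}, $|\theta'(\al)|\approx1$, so $\theta$ is strictly monotone on $I$. Because $I$ is bounded, $\theta$ covers each direction at most $O(|I|)$ times. Hence
\[
\FavP(E)\lesssim\int_I|\pi_{\theta(\al)}(E)|\,d\al\lesssim\int_{\theta(I)}|\pi_\theta(E)|\,d\theta\lesssim\Fav(E),
\]
where the multiplicity of the covering of $\R/\pi\mathbb Z$ is bounded by $1+|I|\sup|\theta'|/\pi$.

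The main obstacle is the containment step. The subtle point is the exact form of the Taylor bound: the $O(\de^2)$ error has to be measured in the $\Phi$-value, which is what gives the $\de^2$-neighborhood statement. I would also need to check that the constants do not depend on $Q$. Choosing $\de_0$ smaller than the distance from $V$ to $\partial U$ ensures that Taylor's theorem applies on $Q$ with the uniform bounds. The thickening argument also has to be handled carefully, using the observation that every component of $\pi_\theta(E)$ has length at least $\de^2$.
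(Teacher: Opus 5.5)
Your argument is correct, and it reaches the theorem by a more direct route than the paper's.

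\textbf{The pointwise bound.} To get $|\Phi_\al(E)|\lesssim|\pi_{\theta_\al(x_Q)}(E)|$, the paper (Lemma~\ref{CompareProjections}) argues geometrically:
\begin{enumerate}
\item it extracts a disjoint Vitali subfamily of the intervals $\pi_\theta(Q')$;
\item it covers $E$ by $\approx|\pi_\theta(E)|/\de^2$ rectangles of size $\de\times\de^2$ in direction $\theta^\perp$;
\item it bounds each $|\Phi_\al(3R_k)|\lesssim\de^2$ using Lemmas~\ref{linesegmentbound} and~\ref{goodrectangles}.
\end{enumerate}
You replace all of this with one Taylor containment, $\Phi_\al(E)\subset\Phi_\al(x_Q)+r\,B_{C\de^2}(\pi_\theta(E-x_Q))$. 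You then observe that every component of $\pi_\theta(E)$ has length at least $\de^2$, so the $C\de^2$-thickening costs only a factor $1+2C$. Your sentence about ``$x$ lying within $O(\de^2)$ of a point of $E$'' is garbled, but the displayed containment follows directly from the expansion, so nothing is lost.

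\textbf{The integration in $\al$.} The paper discretizes into sectors $\theta_j=j\pi/n$. It uses Lemma~\ref{transversality} to see that each $A_j$ consists of $O(1)$ intervals of length $\approx 1/n$. It then needs the Lipschitz bound of Lemma~\ref{continuity}, with $n$ taken large depending on the number of squares and on $\de$. You instead make the exact change of variables $\theta=\theta(\al)$, using $r^2\theta'=\det(\nabla\Phi_\al,\partial_\al\nabla\Phi_\al)$, which is precisely the content of Lemma~\ref{transversality}. The bounded multiplicity coming from $|I|<\infty$ and $\pi$-periodicity of $\theta\mapsto|\pi_\theta(E)|$ finishes the job, with no Riemann-sum approximation. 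Your version also shows that no smallness of $\de$ is needed for the upper bound.

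\textbf{What each route buys.} The paper's rectangle machinery (fibers sitting in thin rectangles tangent to them) is built mainly so it can be reused for the reverse inequality behind Theorem~\ref{thm: mainlocal-lower}. Your linearization, however, is symmetric. Running it backwards gives $\pi_\theta(E-x_Q)\subset r^{-1}\bigl(\Phi_\al(E)-\Phi_\al(x_Q)\bigr)+[-C\de^2,C\de^2]$. By \eqref{gradient} and \eqref{seconddv}, each $\Phi_\al(Q')$ is an interval of length $\gtrsim\de^2$ once $\de$ is small, so every component of $\Phi_\al(E)$ is at least that long. The same thickening argument would therefore also give the reverse comparison in Lemma~\ref{CompareProjections}, without appealing to \eqref{components} or Lemma~\ref{fiberrectangle}.
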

\bigskip

Theorem \ref{thm: mainlocal} is based on a local similarity between linear and nonlinear projections, as follows. 
Let $\Phi_\alpha:U\to\R$, where $U\subset\R^2$ is an open set, be a $C^2$ mapping such that $\nabla\Phi_\alpha\neq 0$ on $U$. Define
$\theta_\alpha(x)$ via
\begin{equation}\label{defn: theta alpha}
\left( \cos{\theta_\alpha(x)}, \sin{\theta_\alpha(x)}\right)
=\frac{\nabla\Phi_\alpha(x)}{|\nabla \Phi_\alpha(x)|}.
\end{equation}

Geometrically, $\theta_\alpha(x)$ is the direction parallel to $\nabla \Phi_\al(x)$
(or, equivalently, perpendicular to the level curve of $\Phi_\alpha$ through $x$).
Then, near a fixed point $x=a$,
$\Phi_\alpha(x)$ is well approximated by the linear function
$$
\Phi_\alpha(a)+|\nabla \Phi_\alpha(a)|\Big( \cos(\theta_\alpha(a))(x_1-a_1)
+\sin(\theta_\alpha(a))(x_2-a_2)
\Big).
$$
This will allow us to exploit the local similarity between linear projections and the generalized projections defined by $\Phi_\alpha$.

In general, we do not expect the upper bound \eqref{eq-compare-squares} to be reversible. While the local linearization works for a fixed parameter $\alpha$, it need not be the case that every direction appearing in the definition of $\Fav(E)$ can be realized by the nonlinear projection family. Nevertheless, we have a localized lower bound involving only those angles that can be so represented.

\begin{thm}[Lower bound for the local comparison]
\label{thm: mainlocal-lower}
Let $I,U, V,\Phi, Q, E$ satisfy the assumptions of Theorem \ref{thm: mainlocal}. Assume in addition that $\Phi$ satisfies
\begin{equation}
\label{components}
(\forall y\in \R)\  Q\cap \Phi_\al^{-1}(y) \textup{ has a (uniformly) bounded number of connected components,}
\end{equation}
and that there is an interval $\Theta\subset [0,2\pi]$ such that for some point $x_0\in Q$,
\begin{equation}\label{angle-range}
\Theta\subset\{\theta_\alpha(x_0):\ \alpha\in I\}.
\end{equation}
Then we have the lower bound
\[
\FavP(E)\gtrsim  \int_\Theta \left| \pi_\theta(E)\right| d\theta.
\]

\end{thm}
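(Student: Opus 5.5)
The plan is to mirror the fiber-containment argument behind Theorem \ref{thm: mainlocal}, but to run it in the reverse direction. We compare each orthogonal projection $\pi_\theta$, for $\theta\in\Theta$, with the single nonlinear projection $\Phi_\alpha$ whose gradient direction at $x_0$ is $\theta$. The starting point is the Jacobian condition \eqref{jacobian}. Differentiating \eqref{defn: theta alpha} in $\alpha$ gives
\[
\partial_\alpha\theta_\alpha(x)=\frac{\det(\nabla\Phi_\alpha,\partial_\alpha\nabla\Phi_\alpha)}{|\nabla\Phi_\alpha|^2},
\]
so by \eqref{gradient} and \eqref{jacobian} we have $|\partial_\alpha\theta_\alpha(x)|\approx 1$. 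Hence $\alpha\mapsto\theta_\alpha(x_0)$ is strictly monotone with derivative comparable to $1$. By \eqref{angle-range}, the set $I_\Theta=\{\alpha:\theta_\alpha(x_0)\in\Theta\}$ is an interval, and the change of variables $\theta=\theta_\alpha(x_0)$ gives
\[
\int_\Theta|\pi_\theta(E)|\,d\theta\approx\int_{I_\Theta}|\pi_{\theta_\alpha(x_0)}(E)|\,d\alpha .
\]
It therefore suffices to prove $|\pi_{\theta_\alpha(x_0)}(E)|\lesssim|\Phi_\alpha(E)|$ for each fixed $\alpha\in I_\Theta$, and then integrate over $I_\Theta\subset I$.

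Fix $\alpha$ and set $\theta=\theta_\alpha(x_0)$. By \eqref{seconddv}, $|\theta_\alpha(x)-\theta|\lesssim\delta$ for all $x\in Q$. Taylor expansion about $x_0$ gives
\[
\Phi_\alpha(x)=\Phi_\alpha(x_0)+|\nabla\Phi_\alpha(x_0)|\,\pi_\theta(x-x_0)+O(\delta^2)\quad\text{on }Q,
\]
with $|\nabla\Phi_\alpha(x_0)|\approx1$. Cover $\Phi_\alpha(E)$ by the dyadic-type intervals $J$ of length $\delta^2$ that it meets; the number of such intervals is at most $|\Phi_\alpha(E)|/\delta^2$ up to a constant. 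Since $E$ is a union of $\delta^2$-squares and $|\nabla\Phi_\alpha|\approx1$, every such square has image of length $\gtrsim\delta^2$, so the covering number is also $\lesssim|\Phi_\alpha(E)|\delta^{-2}$. For each such $J$, the set $\Phi_\alpha^{-1}(J)\cap Q$ is a union of neighborhoods of width $\approx\delta^2$ around pieces of level curves. This is exactly the fiber geometry used for Theorem \ref{thm: mainlocal}. By \eqref{components}, these pieces form a uniformly bounded number of components. Each component is a $C^2$ arc of length $\lesssim\delta$ with curvature $O(1)$ and tangent direction within $O(\delta)$ of $\theta^\perp$. Hence each component's $\delta^2$-neighborhood lies in a rectangle of dimensions $C\delta\times C\delta^2$ with direction $\theta^\perp$, since the arc deviates from its chord by $O(\delta^2)$. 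Projecting by $\pi_\theta$ sends each such rectangle into an interval of length $\lesssim\delta^2$. Consequently $\pi_\theta(E\cap\Phi_\alpha^{-1}(J))$ has measure $\lesssim\delta^2$, and summing over $J$ gives $|\pi_\theta(E)|\lesssim|\Phi_\alpha(E)|$.

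The main obstacle is the component-counting step, and it is where hypothesis \eqref{components} is essential. Without it, a single level set $\Phi_\alpha^{-1}(y)\cap Q$ could wind through $Q$ as many disjoint arcs spread out in the $\theta$ direction. Their projections under $\pi_\theta$ would then be many separate intervals, and the orthogonal projection could be much larger than the nonlinear one. I would first make this precise by showing that the thickened preimage $\Phi_\alpha^{-1}(J)\cap Q$ also has $O(1)$ components. One way is to note that any component of the thickened set contains points of a level curve $\Phi_\alpha^{-1}(y)$ for some $y\in J$, and use the implicit function theorem with $|\nabla\Phi_\alpha|\approx1$ to connect nearby level curves. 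If that proves delicate, the fallback is to apply the rectangle containment directly to each of the boundedly many components of $\Phi_\alpha^{-1}(y)$. Then, since the level curves for $y\in J$ lie within $O(\delta^2)$ of one another, the whole $\delta^2$-neighborhood is covered by $O(1)$ rectangles of size $C\delta\times C\delta^2$ in direction $\theta^\perp$. Finally, choosing $\delta_0$ small ensures that the Taylor remainder and the direction error stay below fixed small multiples of $\delta^2$ and $\delta$ respectively.
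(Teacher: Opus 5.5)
Your proof is correct. Its core, the fixed-$\alpha$ inequality $|\pi_{\theta_\alpha(x_0)}(E)|\lesssim|\Phi_\alpha(E)|$, is the second half of Lemma \ref{CompareProjections}. Your fallback argument is the paper's own: fiber components placed in $\delta\times\delta^2$ rectangles via Lemma \ref{fiberrectangle}, $O(1)$ of them by \eqref{components}, then thickened by Lemma \ref{commutativity}. Your grid of $\delta^2$-intervals simply replaces the Vitali subcollection.

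The genuine difference is the reduction to fixed $\alpha$. The paper splits $\Theta$ into $n$ arcs and uses Lemma \ref{transversality} to see that each $A_j$ is $O(1)$ intervals of length $\approx|\Theta|/n$. It then uses the Lipschitz bound of Lemma \ref{continuity} to replace $\theta_\alpha(x_0)$ by $\theta_j$. That step forces $n$ to be large in terms of the number of squares and $\delta$, and needs $|\pi_\theta(E)|\geq\delta^2$ to absorb the error. Your exact change of variables $\theta=\theta_\alpha(x_0)$ removes the discretization and Lemma \ref{continuity} entirely. For the inequality direction you use, only $|\partial_\alpha\theta_\alpha(x_0)|\lesssim 1$ and the fact that every $\theta\in\Theta$ is attained matter. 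So you need not check that $I_\Theta$ is a single interval, or worry about the lift of $\theta_\alpha(x_0)$ wrapping around modulo $2\pi$.

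Your discussion of the ``main obstacle,'' however, is mistaken, and your own Taylor expansion shows why. Let $h(s)=\pi_\theta(x_0)+(s-\Phi_\alpha(x_0))/|\nabla\Phi_\alpha(x_0)|$, an affine map of slope $\approx 1$. The expansion you wrote says exactly that $|\pi_\theta(x)-h(\Phi_\alpha(x))|\lesssim\delta^2$ for every $x\in Q$. Hence $\pi_\theta(E)$ lies in the $C\delta^2$-neighbourhood of $h(\Phi_\alpha(E))$. That set is a finite union of intervals, each of length $\gtrsim\delta^2$ (the images of the $\delta^2$-squares), with total length $\approx|\Phi_\alpha(E)|$. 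Such a neighbourhood has measure $\lesssim|\Phi_\alpha(E)|$. This proves the fixed-$\alpha$ bound directly, with no component counting and no appeal to Lemma \ref{commutativity}.

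In particular, $\Phi_\alpha^{-1}(y)\cap Q$ can never be ``spread out in the $\theta$ direction'': it lies in the strip $|\pi_\theta(x)-h(y)|\lesssim\delta^2$. It can have many components, since a level curve may cross a side of $Q$ many times while staying within $O(\delta^2)$ of a line. But all those components sit in one $\delta\times\delta^2$ rectangle, so \eqref{components} is not actually needed for this inequality.

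The direct route also avoids a boundary issue that your fallback shares with the paper. The point of $\Phi_\alpha^{-1}(y)$ within $O(\delta^2)$ of a given $x'\in Q$, supplied by Lemma \ref{commutativity}, may lie outside $Q$, where \eqref{components} gives no control.
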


Together, Theorems \ref{thm: mainlocal} and
\ref{thm: mainlocal-lower} show that generalized Favard lengths
are locally comparable to classical Favard lengths 
on the range of directions realized by the projection family.

\subsection{Self-similar sets}\label{subsec-selfsim}

As a first application, we transfer classical Favard-length estimates to generalized projection families.

Let $L\in\mathbb{N}$ with $L\geq 3$, and let $b_1,\dots,b_L\in\R^2$. Let $E\subset\R^2$ be the unique non-empty compact set satisfying
$$
E=\bigcup_{i=1}^L \Lambda_i(E), \hbox{ where }\Lambda_i(x)=L^{-1}x+b_i.
$$
We assume that the Strong Separation Condition holds, so that $\Lambda_i(E)\cap \Lambda_j(E)=\emptyset$ for $i\neq j$. This implies that $E$ has both the similarity dimension and the Hausdorff dimension equal to 1 \cite{Hutch1981}. We will further assume that the points $b_i$ are not all collinear, so that $E$ is purely unrectifiable.

Define the finite iterations of $E$ inductively by
$$
E_0=[0,1]^2,\ E_n=\bigcup_{i=1}^L \Lambda_i(E_{n-1})\hbox{ for }n\in\mathbb{N}.
$$
Intuitively, we may think of $E_n$ as $L^{-n}$-neighbourhoods of $E$; this is not actually true, but $E_n$ may be covered by $O(1)$ such neighbourhoods with the big-$O$ constant independent of $n$, and vice versa. 

By Besicovitch's classic theorem \cite{Bes39}, we have $\Fav(E)=0$, hence $\Fav(E_n)\to 0$ as $n\to\infty$. For self-similar sets defined as above, we have the quantitative bound
\begin{equation}\label{eq:subexp}
cn^{-1} \leq \Fav(E_n)\leq C\exp(-c_0\sqrt{\log n}).
\end{equation}
The lower bound is due to Mattila \cite{Mat90}, and the upper bound was proved by Bond and Volberg \cite{BoV12}. 
Better bounds (both upper and lower) are known in a number of special cases, see \cite{BaV10, BLV14, BoV10, CSS26, LM22, LZ10, NPV10}. For example, for the four-corner set $K_n$ we have the estimate
\begin{equation}\label{NPV-bound}
\Fav(K_n)\lesssim_\e n^{-\frac{1}{6}+\e}
\end{equation}
for all $\e>0$ \cite{NPV10}; see also \cite{Bond2011}. 


\begin{thm}\label{thm:comparison} (Global comparison).
Let $E_n$ be the $n$-th iteration of a purely unrectifiable self-similar 1-set as described above.
With $\Phi$ satisfying the assumptions of Theorem \ref{thm: mainlocal}, we have
\begin{equation}\label{eq: Fav compare}
\frac{1}{n}\lesssim
\FavP(E_{n}) \lesssim \Fav(E_{\lfloor n/2\rfloor}).
\end{equation}
Thus, asymptotic upper bounds on the Favard length of $E_n$ (such as the upper bounds in (\ref{eq:subexp}) and (\ref{NPV-bound})) also hold for the generalized Favard length $\FavP(E_n)$.
\end{thm}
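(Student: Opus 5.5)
The upper bound comes from decomposing $E_n$ into pieces living in squares of side $\de=L^{-m}$ with $m=\lceil n/2\rceil$, applying Theorem \ref{thm: mainlocal} to each piece, and reassembling by self-similarity. The lower bound comes from a Mattila-type energy argument adapted to $\Phi$. Throughout, we assume $E_0=[0,1]^2$, or a fixed neighbourhood of it, lies in the compact set $V\subset U$; this can be arranged by rescaling, and we take $n$ large enough that $L^{-m}<\de_0$.

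\textbf{Upper bound.} We write
\[
E_n=\bigcup_{w} \Lambda_w(E_{n-m}),
\]
where $w$ ranges over the $L^m$ words of length $m$ and $\Lambda_w$ is the corresponding composition of similitudes. Each piece $\Lambda_w(E_{n-m})$ lies in the square $Q_w=\Lambda_w([0,1]^2)$ of side $\de=L^{-m}$. Since $n-m\ge m$, i.e.\ $L^{-n}\le\de^2$, the piece is a finite union of squares of side $L^{-n}$, and each of these can be subdivided into squares of side $L^{-2m}=\de^2$ when $n=2m$. For general $n$ I would either take $m=\lfloor n/2\rfloor$ and cover by $O(1)$ squares of side $\de^2$, or simply note that $E_n\subset E_{2\lfloor n/2\rfloor}$ and use monotonicity of $\FavP$. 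Subadditivity of $|\Phi_\al(\cdot)|$ and Theorem \ref{thm: mainlocal} give
\[
\FavP(E_n)\le\sum_w \FavP(\Lambda_w(E_{n-m}))\lesssim \sum_w \Fav(\Lambda_w(E_{n-m})).
\]
Linear projections commute with similarities up to scaling, so
\[
\Fav(\Lambda_w(E_k))=L^{-m}\Fav(E_k).
\]
Summing over the $L^m$ words yields $\FavP(E_n)\lesssim \Fav(E_{n-m})\le \Fav(E_{\lfloor n/2\rfloor})$, using that $E_k$ is decreasing in $k$. The covering issue is cosmetic, since $E_{n-m}\subset E_{m'}$ for $m'\le n-m$.

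\textbf{Lower bound.} Let $\mu_n$ be normalized Lebesgue measure on $E_n$; it has total mass $\approx1$ and satisfies $\mu_n(B(x,r))\lesssim r$ for $r\ge L^{-n}$. By Cauchy--Schwarz,
\[
|\Phi_\al(E_n)|\ge \|\Phi_{\al\#}\mu_n\|_2^{-2},
\]
so Jensen's inequality gives
\[
\FavP(E_n)\gtrsim |I|^2\Big(\int_I \|\Phi_{\al\#}\mu_n\|_2^2\,d\al\Big)^{-1}.
\]
The main step is the transversality estimate
\[
\int_I \|\Phi_{\al\#}\mu\|_2^2\,d\al\lesssim \iint \frac{d\mu(x)\,d\mu(y)}{\max(|x-y|,L^{-n})}
\]
for measures supported in $E_n$. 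This is the Peres--Schlag type bound. It follows from \eqref{gradient} and \eqref{jacobian}: for fixed $x\ne y$, the function $\al\mapsto \Phi_\al(x)-\Phi_\al(y)$ is small only on an $\al$-set of measure $\lesssim \e/|x-y|$. I expect this to be the main obstacle, especially proving the sublevel estimate uniformly without global injectivity. One could instead localize to squares of side $\de_0$ and use Theorem \ref{thm: mainlocal-lower} together with Mattila's bound $\Fav(E_n)\gtrsim 1/n$, but that requires \eqref{components}, so I would try the energy route first. Given the estimate, the energy of $\mu_n$ at scale $L^{-n}$ is $\lesssim\sum_{k\le n}1\approx n$ by the dyadic scale decomposition and the Frostman bound. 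This gives $\FavP(E_n)\gtrsim 1/n$.
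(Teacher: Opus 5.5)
Your upper bound is the paper's argument with the roles of $m$ and $n-m$ interchanged, and it works. One correction: for $m=\lceil n/2\rceil$ the right inequality is $n-m\le m$, i.e.\ $L^{-n}\ge \de^2=L^{-2m}$. This, together with $2m-n\in\{0,1\}$, is what lets you subdivide each $L^{-n}$-square exactly into $\de^2$-squares. So you need no monotonicity in $k$, which would in any case require $\Lambda_i([0,1]^2)\subset[0,1]^2$, something the paper does not assume.

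The lower bound has a genuine gap, at exactly the step you flag. You assert that \eqref{gradient} and \eqref{jacobian} give, for all $x\neq y$,
\[
\bigl|\{\al\in I:\ |\Phi_\al(x)-\Phi_\al(y)|<\eta|x-y|\}\bigr|\lesssim \eta ,
\]
but you give no argument. As a global statement it is false, because the hypotheses are pointwise and cannot control pairs of distant points. For instance, $\Phi_\al(s,t)=s\cos(t-\al)$ on $U=(1,2)\times(0,3\pi)$ satisfies \eqref{gradient}, \eqref{seconddv} and \eqref{jacobian} (the determinant equals $s$). Yet $\Phi_\al(s,t)=\Phi_\al(s,t+2\pi)$ for every $\al$, so for such pairs the sublevel set is all of $I$. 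Your fallback via Theorem \ref{thm: mainlocal-lower} does not help either: $E_n\cap Q$ with $Q$ of side $\de_0$ is not a union of $\de_0^2$-squares, and at the scale where it is, subadditivity runs the wrong way.

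The missing idea is to localize first, which is what the paper does. By continuity, pick a small convex $U'\subset U$ meeting $E$ and an interval $I'\subset I$ with $|\nabla\Phi_\al(x)-v|<\e$ and $|\partial_\al\nabla\Phi_\al(x)-w|<\e$ on $I'\times U'$. By \eqref{gradient} and \eqref{jacobian}, $|v|\approx1$ and $|\det(v,w)|\approx1$.

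For $x,y\in U'$ and $u=(x-y)/|x-y|$, the mean value theorem gives $(\Phi_\al(x)-\Phi_\al(y))/|x-y|=\nabla\Phi_\al(z)\cdot u$. Suppose this is smaller than $\eta<\e$ in absolute value for a single $\al_0\in I'$. Then $|v\cdot u|<2\e$, so $u$ is nearly perpendicular to $v$, and hence $|w\cdot u|\gtrsim|\det(v,w)|/|v|\gtrsim 1$ for $\e$ small. Consequently the $\al$-derivative of the ratio, $\partial_\al\nabla\Phi_\al(z')\cdot u$, has constant sign and size $\gtrsim1$ on all of $I'$. So the sublevel set in $I'$ is an interval of length $\lesssim\eta$.

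With this in hand, run your Cauchy--Schwarz/energy argument over $\al\in I'$ only. Take $\mu$ to be normalized Lebesgue measure on a subcopy $\Lambda_\omega(E_{n-k})\subset U'$ of fixed level $k$; its energy at scale $L^{-n}$ is still $\lesssim n$. Then conclude by monotonicity: $\FavP(E_n)\ge\int_{I'}|\Phi_\al(\Lambda_\omega(E_{n-k}))|\,d\al\gtrsim 1/n$.

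This localized sublevel bound is precisely the transversality hypothesis of \cite[Theorem 1.5]{BongersTaylor2023}, which the paper verifies and then invokes as a black box. Once it is supplied, your energy computation is in effect a self-contained proof of that theorem in this setting.
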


The upper bound in Theorem \ref{thm:comparison} is derived from the local comparison in Theorem \ref{thm: mainlocal} above, while the lower bound follows from the main result of the third author's joint work with Bongers, namely \cite[Theorem 1.5]{BongersTaylor2023}. Further details are provided in the proof.

\begin{rmk}
The appearance of $\lfloor n/2\rfloor$ in Theorem \ref{thm:comparison}
reflects the way the local comparison estimate is combined with the
self-similar structure of the approximating sets. Roughly speaking, the
local comparison theorem is applied on pieces of diameter $\delta$, while
the approximation occur at scale $\delta^2$.
Subadditivity then relates the generalized Favard length at level $n$ to
the classical Favard length at a coarser level $n/2$.
\end{rmk}

\subsection{Probabilistic Constructions}\label{subsec-prob_constructions}
Beyond self-similar sets, the bounds in \ref{eq: Fav compare} extend to certain probabilistic constructions, such as those in \cite{CSS26}.

Fix $L\geq 2$, divide the unit square into an $L^n \times L^n$ grid, 
and let $\mathcal{D}_n$ be the resulting set of squares. 

Create a planar $1$-dimensional random fractal $S$ as follows: 
\begin{enumerate}
    \item Subdivide $S_0=[0,1]^2$ into an $L\times L$ grid of squares, and choose $L$ of these squares at random according to the probability distribution $\chi$ on $\binom{\mathcal{D}_1}{ L}$. Let $S_1 \subset S_0$ denote the union of the chosen squares. 
    \item Repeat the step above independently in each of the $L$ squares in $S_1$, again using the distribution $\chi$, and get $S_2\subset S_1$, where $S_2$ is made up of $L^2$ squares of $\mathcal{D}_2$. 
    \item Repeat the process indefinitely to obtain a nested sequence of random sets $S_0\supset S_1 \supset S_2 \supset \cdots$ and a limiting set $S= \bigcap_n S_n$. 
\end{enumerate}
The probability distribution $\chi$ is said to be \textbf{vertically degenerate} if it selects one square in each column almost surely, and \textbf{horizontally degenerate} if it selects one square in each row almost surely.  It is said to be \textbf{degenerate} if it is either horizontally or vertically degenerate, and is \textbf{non-degenerate} otherwise.  Finally, $\chi$ is \textbf{uniform} if, for any square $Q$ in the initial $L\times L$ grid, the probability of choosing $Q$ for $S_1$ is $1/L$.

Recent work of Chang, Shmerkin, and Suomala \cite{CSS26} establishes the bounds
\begin{equation}
\label{eq: CSS nd}
\E[\Fav(S_n)]\approx \frac{1}{n}
\end{equation}
when $\chi$ is uniform and non-degenerate, and
\begin{equation}
\label{eq: CSS d}
\E[\Fav(S_n)]\approx \frac{\log(n)}{n}
\end{equation}
when $\chi$ is uniform and degenerate.

\begin{thm}
\label{thm: CSS generalization}
Let $\Phi$ satisfy the hypotheses of Theorem \ref{thm: mainlocal} with $[0,1]^2\subset U$, and let $S_n$ be as constructed as above with respect to a uniform $\chi$.  We have
\[
\E[\FavP(S_n)]\lesssim \E[\Fav(S_{\lfloor n/2\rfloor})],
\]
with constant depending only on $\Phi,I,$ and $L$.  In particular, the upper bound in (\ref{eq: CSS nd}) (respectively, (\ref{eq: CSS d})) holds with $\FavP(E)$ in place of $\Fav(E)$ when $\chi$ is non-degenerate (respectively, degenerate).
\end{thm}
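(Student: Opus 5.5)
The plan is to mirror the proof of the upper bound in Theorem \ref{thm:comparison}. We apply Theorem \ref{thm: mainlocal} on each square of generation $m=\lceil n/2\rceil$ (or $\lfloor n/2\rfloor$; we choose whichever makes the scales match) and then use the statistical self-similarity of the random construction. Set $m=n-\lfloor n/2\rfloor$, so that $m\ge n/2$ and $n-m=\lfloor n/2\rfloor$. The set $S_m$ is a union of $L^m$ squares $Q\in\cD_m$ of side $\de=L^{-m}$. For $n$ large, $\de<\de_0$, where $\de_0$ comes from Theorem \ref{thm: mainlocal} applied with $V=[0,1]^2$; small $n$ are absorbed into the constant, since $\FavP(S_n)\le |I|\sup|\Phi(I\times V)|\lesssim 1$.

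The first step is deterministic. By subadditivity of Lebesgue measure,
\[
\FavP(S_n)\le \sum_{Q\subset S_m,\,Q\in\cD_m}\FavP(S_n\cap Q).
\]
Each $S_n\cap Q$ is a union of squares of side $L^{-n}\le \de^2$. The side lengths are not exactly $\de^2$, so we replace each piece by the union of $\de^2$-grid squares (or squares of side $L^{-2m}$, via the grid $\cD_{2m}$) containing it. Concretely, if $n$ is even we take $m=n/2$ and the squares have side exactly $L^{-2m}=\de^2$. If $n$ is odd we enlarge $S_n\cap Q$ to $S_{2m'}\cap Q$ at a nearby level, where $m'$ is chosen so that $2m'\le n$. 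Either choice changes $\lfloor n/2\rfloor$ only by $O(1)$ levels, and this costs nothing because $\Fav(S_k)$ is monotone decreasing in $k$ (the sets are nested). Theorem \ref{thm: mainlocal} then gives
\[
\FavP(S_n\cap Q)\lesssim \Fav(S_n\cap Q),
\]
with a constant uniform in $Q$.

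The second step uses self-similarity. Let $T_Q$ be the affine map sending $[0,1]^2$ to $Q$ by scaling by $L^{-m}$. Then $\Fav(S_n\cap Q)=L^{-m}\Fav(T_Q^{-1}(S_n\cap Q))$. Conditionally on $S_m$, the set $T_Q^{-1}(S_n\cap Q)$ has the law of $S_{n-m}$, independent across $Q$, by construction. Taking expectations and conditioning on $S_m$, which has exactly $L^m$ squares, we get
\[
\E[\FavP(S_n)]\lesssim \sum_Q L^{-m}\,\E[\Fav(S_{n-m})]=\E[\Fav(S_{\lfloor n/2\rfloor})].
\]
This argument does not use uniformity of $\chi$. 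Uniformity and (non)degeneracy enter only when we invoke (\ref{eq: CSS nd}) and (\ref{eq: CSS d}) from \cite{CSS26}. Substituting $n/2$ there gives $\E[\FavP(S_n)]\lesssim 1/n$, respectively $\log n/n$, since halving $n$ changes these rates only by constants.

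The main obstacle is the bookkeeping in the first step. Theorem \ref{thm: mainlocal} is stated for unions of squares of side exactly $\de^2$ inside a square of side $\de$, so we must match the scales $L^{-m}$ and $L^{-n}$ correctly. The remedy is to coarsen $S_n\cap Q$ to level $2m$ squares, which only enlarges the set and therefore only increases the left-hand side of the subadditivity bound. We also need the claimed uniform constant, which follows from Theorem \ref{thm: mainlocal} since its constant is independent of $Q$ and $\de$.
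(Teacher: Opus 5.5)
Your argument is the paper's in substance: subadditivity over squares of generation about $n/2$, Theorem~\ref{thm: mainlocal} on each piece, then rescaling and statistical self-similarity. The one real variation is in the probabilistic step. The paper sums over all $Q\in\cD_{n/2}$ and uses uniformity to get $\mathbb{P}(Q\in\cS_{n/2})=L^{-n/2}$. You instead condition on the first $m$ generations and use that $\#\cS_m=L^m$ holds deterministically. This is correct, and so is your remark that the inequality does not need uniformity. Even in the paper's computation, all that matters is $\sum_Q\mathbb{P}(Q\in\cS_{n/2})=\E[\#\cS_{n/2}]=L^{n/2}$. Uniformity is therefore needed only to invoke \cite{CSS26}.

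Your odd-$n$ bookkeeping, however, is garbled. With $m=\lceil n/2\rceil$ you have $\de^2=L^{-2m}\le L^{-n}$, not $L^{-n}\le\de^2$, so there is nothing to coarsen. Each square of side $L^{-n}$ is a union of $L^{2(2m-n)}$ grid squares of side $\de^2$. Theorem~\ref{thm: mainlocal} therefore applies to each piece as it stands, and the self-similarity step lands exactly on $S_{n-m}=S_{\lfloor n/2\rfloor}$. This even handles odd $n$ without the paper's detour through $S_n\subset S_{n-1}$.

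The alternative you sketch, enlarging to $S_{2m'}\cap Q$ with $2m'\le n$, works only if you also move $Q$ to generation $m'$; that is exactly the paper's route. If $Q$ stays in $\cD_m$, you end at $\E[\Fav(S_{\lfloor n/2\rfloor-1})]$, where monotonicity of $\Fav(S_k)$ points the wrong way. You would then need $\Fav(S_{k-1})\lesssim_L\Fav(S_k)$. That is true (apply Vitali to the projected parent squares, each of which contains a child square), but it does not follow from nestedness.

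Finally, absorbing small $n$ into the constant needs the lower bound $\Fav(S_k)\ge\pi L^{-k}$ on the right-hand side, not just $\FavP(S_n)\lesssim 1$.
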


\subsection{Unions of circles and level curves}
Our second application concerns unions of circles and, more generally,
unions of level curves. Questions about unions of circles arise naturally
in geometric measure theory and harmonic analysis, see \cite{Mitsis1999, Oberlin2006, Oberlin2007, Wolff1997, Wolff2000}. Here, we focus on the following question. 
For $x\in \R^2$ and $r>0$, let $C(x,r)$ denote the circle centered at $x$ of radius $r=r(x)$, 
$$C(x,r) = \{y\in \R^2: |x-y| = r\}.
$$  
Given a compact set $E\subset\R^2$ (the set of centers) and a function $r:E\to (0,\infty)$, what can we say about the size of the set 
$$\mathcal{E}:= \bigcup_{x\in E} C(x,r(x))?$$
Suppose that $E$ has Hausdorff dimension $\alpha\in[0,2]$. Wolff \cite{Wolff2000} proved that if $\alpha>1$, then $\mathcal{E}$ has positive 2-dimensional Lebesgue measure. For $\alpha<1$, Oberlin \cite{Oberlin2007} proved that $\mathcal{E}$ has Hausdorff dimension at least $(1+\alpha)$. This is consistent with the natural heuristic that a one-parameter family of circles contributes at least one additional dimension beyond that of the parameter set. 

The critical case $\alpha=1$ is more subtle. In this case, the result of \cite{Oberlin2007} implies that $\mathcal{E}$ has Hausdorff dimension 2, but not necessarily positive Lebesgue measure. Indeed, 
Talagrand \cite{Talagrand} constructed a set of 2-dimensional Lebesgue measure zero containing a circle centered at every point of a line.
For circles of fixed radius,
Simon and Taylor \cite{STdim} showed that rectifiability rather than dimension alone determines whether the union has positive Lebesgue measure. More precisely, for $1$-sets $E$, the union of circles $\mathcal{E}$ with fixed $r(x)=r_0$ has positive Lebesgue measure if and only if $E$ is not purely unrectifiable.

The fixed-radius result of Simon and Taylor is closely related to
projection theory: the circle condition
$|x-y|=r_0$ may be viewed as the level-set equation
$G(x,y)=|x-y|-r_0=0$. More generally, if the radius varies with the
center $x$, then the circles arise as level curves of
$G(x,y)=|x-y|-r(x)$, placing the problem within the broader class of
level-curve families considered in this paper.

One of the motivations for the framework developed here is that it
allows such families of level curves to be studied through a common
projection mechanism. The approach is quite different from that of
Simon-Taylor \cite{STdim}. Rather than constructing and analyzing a
specific family of nonlinear projections associated with circles, we
develop a general local comparison principle (Theorems
\ref{thm: mainlocal} and \ref{thm: mainlocal-lower}) and use it to
transfer Favard-length estimates from the classical linear setting
(Theorem \ref{thm:comparison}). Theorem
\ref{thm-Gfunction} then converts these projection-theoretic estimates
into measure estimates for unions of level curves. Theorem
\ref{thm:unioncircles} is obtained as a special case of this more
general framework.

Our next theorem shows that the measure-zero direction of this result persists when the radius is allowed to vary slowly.

\begin{thm}[Large unions of circles with centers in a purely unrectifiable set have measure zero]\label{thm:unioncircles}
Let $0<a<b<\infty$ and $0\leq c<1$ be fixed real numbers. Let also
$r: \R^2\to [a,b]$ be a $C^2$ function such that
\begin{equation}\label{grad-r-small-intro}
|\nabla r|\leq c.
\end{equation}
    Let $E$ be a purely unrectifiable self-similar 1-set as defined above.
    Then $$\bigcup_{x\in E} C(x,r(x))$$ 
    has 2-dimensional Lebesgue measure zero. Furthermore, if
 $E_n\subset\R^2$ is the $n$-th approximation of $E$, then
 $$\left|\bigcup_{x\in E_n} C(x,r(x))\right|\lesssim \Fav(E_{\lfloor n/2\rfloor} ). $$
\end{thm}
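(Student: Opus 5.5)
The plan is to realize the union of circles as a union of level curves of a projection family and then invoke Theorem \ref{thm:comparison}. For $y\in\R^2$ set $G(x,y)=|x-y|-r(x)$, so that $y\in C(x,r(x))$ if and only if $G(x,y)=0$. Write $y$ in polar coordinates around a fixed reference region; more precisely, parametrize $y$ by a pair $(\al,t)$ and define $\Phi_\al(x)$ so that the circle through $y$ corresponds to the condition $\Phi_\al(x)=t$. Since $E\subset[0,1]^2$, it suffices to work with $y$ in a bounded region $|y|\le b+2$. I would take the direction of $y-x$ as the parameter. Concretely, for fixed $y$ the set $\{x: |x-y|=r(x)\}$ is a curve $\Gamma_y$, and $y\in\bigcup_{x\in E}C(x,r(x))$ if and only if $E\cap\Gamma_y\neq\emptyset$.

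Foliate the $y$-plane by lines $y=\al e_1^\perp+ s e$, or more simply use the Cartesian coordinates $y=(\al,s)$ locally, and define $\Phi_\al(x)$ to be the value $s$ such that $G(x,(\al,s))=0$ with $s$ on a chosen branch, for instance the upper intersection of the vertical line with the circle. This is a $C^2$ function by the implicit function theorem wherever $\partial_s G\neq 0$. Then by Fubini,
\[
\Big|\bigcup_{x\in E}C(x,r(x))\cap W\Big|=\int |\{s:(\al,s)\in\cdots\}|\,d\al=\int_I|\Phi_\al(E)|\,d\al=\FavP(E),
\]
where $W$ is a piece on which this branch is valid. Covering the relevant bounded region of the $y$-plane by finitely many such pieces (rotating coordinates as needed so that $\partial_s G$ stays away from $0$, which is possible since $\nabla_y G=(y-x)/|y-x|$ is a unit vector), we reduce to bounding finitely many $\FavP(E_n)$.

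Next I would verify the hypotheses \eqref{gradient}, \eqref{seconddv}, \eqref{jacobian}. Differentiating $G(x,\al,\Phi_\al(x))=0$ gives $\nabla\Phi_\al=-\nabla_xG/\partial_sG$ with $\nabla_x G=(x-y)/|x-y|-\nabla r(x)$. Here the condition $|\nabla r|\le c<1$ is exactly what gives $|\nabla_xG|\ge 1-c>0$, hence \eqref{gradient}. \eqref{seconddv} follows from $r\in C^2$, $|x-y|\ge a$, and a lower bound on $|\partial_sG|$ on each piece. The Jacobian condition \eqref{jacobian} is the \emph{main obstacle}: it asks that the direction of $\nabla\Phi_\al(x)$ rotate at nonzero speed as $\al$ varies. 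Up to nonvanishing factors, that direction is that of $u-\nabla r(x)$ with $u=(x-y)/|x-y|$. As $\al$ moves $y$ along the line, $u$ rotates with angular speed comparable to $|\partial_\al y\wedge u|/|x-y|$. Since $|\nabla r|\le c<1$, the map $u\mapsto u-\nabla r$ from the circle is a smooth degree-one curve whose angular derivative is bounded below by $(1-c)/(1+c)$. I would choose the pieces so that $y$ moves transversally to $u$ (branch choice with $|\partial_sG|$ bounded below forces $|e_1\wedge u|$ bounded below), and conclude \eqref{jacobian}. Shrinking $I$ and $U$ to slightly smaller open sets makes all bounds uniform.

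Finally, Theorem \ref{thm:comparison} gives $\FavP(E_n)\lesssim\Fav(E_{\lfloor n/2\rfloor})$ for each of the finitely many pieces. Summing yields the stated estimate. Since $E\subset E_n$ for all $n$ and $\Fav(E_m)\to0$ by Besicovitch's theorem (or by \eqref{eq:subexp}), the union over $x\in E$ has measure zero.
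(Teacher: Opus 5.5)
Your proposal is correct and is essentially the paper's argument: the paper takes $G(x,y)=|x-y|^2-r(x)^2$ and invokes Theorem \ref{thm-Gfunction}, whose proof follows your scheme (product charts in $(x,y)$ on which $y_2=\Phi_{y_1}(x)$, Fubini to identify each piece of the union with $\FavP(E_n)$, then Theorem \ref{thm:comparison}). The one difference is how transversality is checked: the paper computes the determinant in \eqref{jacobian-G} as $8r\big(\nabla r\cdot(x-y)-r\big)$, which is governed by the same quantity $1-\nabla r\cdot u\geq 1-c$ that drives your rotation argument.

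Two small slips. First, the charts must localize $x$ as well as $y$, because whether $|\partial_s G|$ is bounded below depends on the direction of $y-x$, not on $y$ alone. Second, at fixed $x$ the point $y=(\al,\Phi_\al(x))$ moves along $C(x,r(x))$ rather than along a line, so $u$ rotates at rate $1/(r(x)|\partial_s G|)$. This is still $\approx 1$ on your pieces, so \eqref{jacobian} holds as you claim.
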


Theorem \ref{thm:unioncircles} follows from Theorem \ref{thm-Gfunction}, a more general result on unions of curves defined as level curves of a function satisfying appropriate regularity and curvature conditions. Since the statement of Theorem \ref{thm-Gfunction} is longer and more complicated technically, we defer it to Section \ref{sec-Gfunction}, along with the proofs of both theorems.

\subsection{Organization of the paper}
Section \ref{notation} introduces notation and basic geometric facts.
Section \ref{sec-geom} establishes several geometric properties of smooth
projection families. 
In Section \ref{sec-localcomparison} we 
prove the
local comparison principle by comparing nonlinear fibers with thin
rectangles. 
Section \ref{sec-globalcomparison} contains the proofs of
the local and global comparison theorems together with applications to
self-similar and random constructions. Finally,
Section \ref{sec-Gfunction} develops applications to unions of level
curves, including Theorem \ref{thm-Gfunction},
Theorem \ref{thm:unioncircles}, and further examples.

\section{Geometric properties of projection families}\label{sec-geom}

We start by noting a few simple properties. First, by the linearity of the projections $\pi_\alpha$, we have
\begin{equation}\label{eq: fav linear} 
\Fav(tE) =  t\Fav(E) 
\end{equation}
for any measurable $E\subset\R^2$.
For the general mappings under consideration, we do not assume linearity and \eqref{eq: fav linear} may not hold with $\FavP$ in place of $\Fav$. However, we still have subadditivity.

\begin{lem}[Subadditivity of generalized Favard length]
Let $X_j\subset\R^2$ be measurable, and let $X=\bigcup_j X_j$. Then
$$
\FavP(X)\leq \sum_j \FavP(X_j).
$$
\end{lem}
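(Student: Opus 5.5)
The argument works fiber by fiber in $\al$: we prove a pointwise-in-$\al$ subadditivity for the image sets and then integrate. Fix $\al\in I$. Since $X=\bigcup_j X_j$, the image of a union is the union of the images:
\[
\Phi_\al(X)=\bigcup_j \Phi_\al(X_j).
\]
Countable subadditivity of Lebesgue (outer) measure on $\R$ then gives
\[
|\Phi_\al(X)|\le \sum_j |\Phi_\al(X_j)|
\]
for every $\al$. Integrating over $I$ and using monotone convergence (or Tonelli for the nonnegative series) to interchange the sum and the integral yields
\[
\FavP(X)=\int_I|\Phi_\al(X)|\,d\al\le \sum_j\int_I|\Phi_\al(X_j)|\,d\al=\sum_j\FavP(X_j).
\]

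The only point needing care is measurability, which the definition of $\FavP$ tacitly assumes. There are two issues.

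\begin{enumerate}[(i)]
\item We need $\Phi_\al(X_j)$ to be Lebesgue measurable. For Borel (or measurable) $X_j$ and continuous $\Phi_\al$, the image is analytic, hence universally measurable. In the applications the $X_j$ are compact, so the images are compact, which is even simpler.
\item We need $\al\mapsto|\Phi_\al(X_j)|$ to be measurable in $\al$.
\end{enumerate}

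I would dispose of both issues at once by interpreting $|\cdot|$ as Lebesgue outer measure and $\int_I$ as an upper (outer) integral whenever measurability is in doubt. Countable subadditivity holds for outer measure without any measurability hypotheses. The inequality $\int^*\sum f_j\le\sum\int^* f_j$ also holds for nonnegative functions: take measurable majorants $g_j\ge f_j$ with $\int g_j=\int^* f_j$, note that $\sum_j g_j$ is a measurable majorant of $\sum_j f_j$, and apply monotone convergence to $\sum_j g_j$.

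For compact $X_j$, measurability in $\al$ can also be checked directly. The set $\{(\al,y): y\in\Phi_\al(X_j)\}$ is closed in $I\times\R$, since it is the image of the closed set $I\times X_j$ under the continuous map $(\al,x)\mapsto(\al,\Phi(\al,x))$ with compact fibers. Tonelli then shows that $\al\mapsto|\Phi_\al(X_j)|$ is Borel.

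This measurability bookkeeping is the main, and only mild, obstacle; the inequality itself is immediate from countable subadditivity.
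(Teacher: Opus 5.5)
Your argument is the same as the paper's: for each fixed $\al$ the image of the union is the union of the images, so countable subadditivity gives $|\Phi_\al(X)|\le\sum_j|\Phi_\al(X_j)|$, and you then integrate in $\al$ (the paper does not address measurability at all, so your outer-measure interpretation is a reasonable addition). One small correction to your remark (i): only the Borel case is right, since a merely Lebesgue-measurable set can have a non-measurable image (a non-measurable subset of a short segment transversal to the level curves of $\Phi_\al$ is a planar null set), but your fallback to outer measure and upper integrals makes this harmless.
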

\begin{proof}
For each $\alpha\in I$, we have $|\Phi_\alpha(X)|=|\bigcup_j \Phi_\alpha(X_j)|
\leq \sum_j |\Phi_\alpha(X_j)|$. The lemma follows upon integration in $\alpha$.
\end{proof}

\begin{lem}[Lipschitz continuity of projection length]
\label{continuity}
If $E\subset \R^2$ is a bounded set with finitely many connected components, then the map 
\[
\theta\mapsto |\pi_\theta(E)|
\]
is Lipschitz.  More precisely, if $n$ is the number of connected components and $r$ is such that $E\subset B_r(0)$, then
\[
||\pi_\theta(E)|-|\pi_\f(E)||\leq nr|\theta-\f|.
\]
\end{lem}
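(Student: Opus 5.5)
The plan is to reduce the statement to single connected components and then control how the projection of one component moves as the direction rotates. Write $E=\bigcup_{i=1}^n E_i$ with the $E_i$ the connected components, and set $e_\theta=(\cos\theta,\sin\theta)$. Since $\pi_\theta$ is continuous and each $E_i$ is connected, $\pi_\theta(E_i)$ is an interval $J_i(\theta)$ with endpoints
\[
m_i(\theta)=\inf_{x\in E_i} x\cdot e_\theta,\qquad M_i(\theta)=\sup_{x\in E_i} x\cdot e_\theta .
\]
Then $\pi_\theta(E)=\bigcup_i J_i(\theta)$ is a union of at most $n$ intervals.

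\textbf{Step 1 (reduction to endpoints).} The measure of a union of $n$ intervals is a $1$-Lipschitz function of the $2n$ endpoints in the $\ell^\infty$ sense, but only after summing over intervals. More precisely, moving each endpoint by at most $\eta$ changes $\bigl|\bigcup_i J_i\bigr|$ by at most $\sum_i(\text{change of } J_i \text{ in the symmetric difference})$. I would write
\[
\Bigl|\,|\pi_\theta(E)|-|\pi_\f(E)|\,\Bigr|\le \Bigl|\bigcup_i J_i(\theta)\ \triangle\ \bigcup_i J_i(\f)\Bigr|\le \sum_{i=1}^n |J_i(\theta)\triangle J_i(\f)|.
\]

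\textbf{Step 2 (endpoint control).} Each function $x\mapsto x\cdot e_\theta$ has $\theta$-derivative $x\cdot e_\theta^{\perp}$, which is bounded by $|x|\le r$. Hence $m_i$ and $M_i$, being an infimum and a supremum of $r$-Lipschitz functions, are themselves $r$-Lipschitz in $\theta$. This yields the bound $|J_i(\theta)\triangle J_i(\f)|\le |m_i(\theta)-m_i(\f)|+|M_i(\theta)-M_i(\f)|$.

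\textbf{Step 3 (the constant; main obstacle).} Step 2 directly gives $2r|\theta-\f|$ per component. Reaching the stated $nr$ requires showing that the two endpoint contributions together cost only $r|\theta-\f|$. What I would try first is to exploit translation invariance of $|J_i(\theta)|$ and recentre each component; however, this does not control the union, since the components have to stay in place. I must flag that I believe this step cannot succeed for the statement as worded. Take $n=1$ and $E$ the horizontal segment from $(-r,0)$ to $(r,0)$, which lies in $B_r(0)$ when that ball is closed, or a slightly shorter segment if it is open. Then $|\pi_\theta(E)|=2r|\cos\theta|$ has slope $2r$ near $\theta=\pi/2$, so the constant $nr$ fails. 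My plan would therefore establish the argument above, which yields Lipschitz continuity, the only property used later, with constant $2nr$. I would also record that $nr$ is valid if $r$ is read as a bound on $\mathrm{diam}(E)$, which is what the same argument gives.
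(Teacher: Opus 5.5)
Your argument is correct and proves the Lipschitz bound with constant $2nr$. Your counterexample is also right. Take $n=1$ and $E=[-r',r']\times\{0\}$ with $r/2<r'<r$, so that $E\subset B_r(0)$ whether the ball is open or closed. Then $|\pi_{\pi/2-h}(E)|-|\pi_{\pi/2}(E)|=2r'\sin h>rh$ for small $h>0$, so the inequality with constant $nr$ is false as stated.

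The paper's proof breaks at the matching step. It covers $E$ by rectangles of dimensions $r\times\rho_i$ lying over the $n'$ disjoint intervals of $\pi_\theta(E)$, but such a slab can meet $B_r(0)$ in a piece of length up to $2r$. In your example, $\pi_{\pi/2}(E)$ is a point, $\rho_1=0$, and the single rectangle would have to contain a segment of length $2r'>r$. Giving the rectangles long side $\mathrm{diam}(E)\le 2r$ repairs the argument and yields $\bigl||\pi_\theta(E)|-|\pi_\f(E)|\bigr|\le n\,\mathrm{diam}(E)\,|\theta-\f|\le 2nr|\theta-\f|$. As you note, only Lipschitz continuity matters downstream. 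The lemma is used solely for the $O(N/n)$ error term in the proof of Theorem~\ref{thm: mainlocal}, where the factor $2$ is irrelevant.

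Your route is genuinely different from the paper's. The paper argues one-sidedly: assuming $|\pi_\theta(E)|\le|\pi_\f(E)|$, it encloses $E$ in the slab rectangles over the intervals of $\pi_\theta(E)$. It then bounds the $\pi_\f$-projection of each rectangle by $\rho_i$ plus (long side)$\cdot|\theta-\f|$. You instead work with the components of $E$, control the endpoints $m_i,M_i$ as an infimum and supremum of $r$-Lipschitz functions of $\theta$, and use $|\bigcup_i A_i\triangle\bigcup_i B_i|\le\sum_i|A_i\triangle B_i|$. This gives both inequalities at once and needs no rectangle geometry.

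The price is that the constant your argument produces, $2n\sup_{x\in E}|x|$, depends on the position of $E$. Even after the best translation it is $2n$ times the circumradius of $E$. For a single filled equilateral triangle of side $d$ this equals $2d/\sqrt{3}>d=\mathrm{diam}(E)$. So your closing remark that the constant $n\,\mathrm{diam}(E)$ is valid is true. However, it comes from the paper's slab argument, which only uses the extent of $E$ along the slab direction, not from your endpoint argument.
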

\begin{proof}
Let $\theta,\f\in [0,\pi]$, and suppose $|\pi_\theta(E)|\leq |\pi_\f(E)|$.  Since $E$ has $n$ connected components, $\pi_\theta(E)$ consists of $n'\leq n$ disjoint intervals.  Let $\rho_i$ be the length of the $i$-th interval.  Then, there exist rectangles $R_1,\dots,R_{n'}$ in direction $\theta$ such that $R_i$ has dimensions $r\times \rho_i$ and
\[
E\subset \bigcup_{i=1}^{n'}R_i.
\]
\begin{figure}
\centering
\begin{tikzpicture}
\draw[thick] (-1,0)--(6,0);

\draw[thick](0,2)--(4,0)--(4.5,1)--(.5,3)--(0,2);

\draw (4.2,.4) arc (60:0:.5);
\draw (3.2,.4) arc (120:180:.5);

\draw [decorate,
    decoration = {brace, amplitude=10pt}] (.75,3.5)--(4.75,1.5);
\draw [decorate,
    decoration = {brace, amplitude=10pt}] (-.5,2.25)--(0,3.25);
\draw [decorate,
    decoration = {brace, amplitude=10pt}] (3.95,-.1)--(0,-.1);
\draw [decorate,
    decoration = {brace, amplitude=10pt}] (4.5,-.1)--(4.05,-.1);

\node[above right] at (2.75,2.85) {$r$};
\node[above left] at (-.5,2.85) {$\rho_i$};
\node[below] at (2,-.5) {$r|\sin(\theta-\f)|$};
\node[below] at (5.2,-.5) {$\rho_i|\cos(\theta-\f)|$};
\node at (5,.3) {$\theta-\f$};
\node at (2,.3) {$\frac{\pi}{2}-(\theta-\f)$};

\end{tikzpicture}
\caption{The projection of the rectangle $R_i$}
  \label{fig: rectangle projection}
\end{figure}
By elementary geometric considerations (Figure \ref{fig: rectangle projection}), we have
\[
|\pi_\f(R_i)|=r|\sin(\theta-\f)|+\rho_i|\cos(\theta-\f)|\leq r|\theta-\f|+\rho_i.
\]
Therefore,
\[
|\pi_\f(E)|\leq \sum_{i=1}^{n'}(r|\theta-\f|+\rho_i)\leq nr|\theta-\f|+|\pi_\theta(E)|.
\]
\end{proof}

Next, we prove that the Jacobian condition
\eqref{jacobian} implies a quantitative transversality condition, made precise in (\ref{eq:transversal}). Informally,
the directions determined by the fibers of $\Phi_\al$
change at a uniformly nondegenerate rate as $\alpha$ varies (see also \cite{BongersTaylor2023}).

\begin{lem}[$\{\Phi_\al\}$ is transversal]
\label{transversality}
If $\Phi\in C^2(I\times U)$ satisfies (\ref{gradient}) and (\ref{jacobian}), and if $\theta_\al(x)$ is as defined in Section \ref{notation}, then
\begin{equation}\label{eq:transversal}
|\partial_\al \theta_\al(x)|\approx 1.
\end{equation}
\end{lem}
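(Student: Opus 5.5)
The plan is to differentiate the defining relation \eqref{defn: theta alpha} in $\al$ and read off $\partial_\al\theta_\al$ in terms of the determinant in \eqref{jacobian}. Write $\nabla\Phi_\al(x)=(\partial_1\Phi_\al,\partial_2\Phi_\al)=\rho(\cos\theta,\sin\theta)$, where $\rho=|\nabla\Phi_\al(x)|$ and $\theta=\theta_\al(x)$. By \eqref{gradient}, $\rho\approx 1$ and in particular $\rho\neq 0$. Locally we may therefore choose a branch of $\theta$, for instance $\theta=\arctan(\partial_2\Phi_\al/\partial_1\Phi_\al)$ or its variant with the roles of the components swapped. This branch is $C^1$ in $\al$ because $\Phi\in C^2$, and $\partial_\al\theta_\al$ does not depend on which branch is chosen.

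Next, differentiate $\partial_1\Phi_\al=\rho\cos\theta$ and $\partial_2\Phi_\al=\rho\sin\theta$ with respect to $\al$:
\[
\partial_\al\partial_1\Phi_\al=\rho_\al\cos\theta-\rho\,\theta_\al'\sin\theta,\qquad
\partial_\al\partial_2\Phi_\al=\rho_\al\sin\theta+\rho\,\theta_\al'\cos\theta,
\]
where the primes and subscripts denote $\partial_\al$. Now compute the determinant in \eqref{jacobian}, namely
\[
\partial_1\Phi_\al\,\partial_\al\partial_2\Phi_\al-\partial_2\Phi_\al\,\partial_\al\partial_1\Phi_\al .
\]
The $\rho_\al$ terms cancel, since they contribute $\rho\rho_\al(\cos\theta\sin\theta-\sin\theta\cos\theta)=0$. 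The $\theta'$ terms give $\rho^2\theta'(\cos^2\theta+\sin^2\theta)$. Hence
\[
\det\begin{pmatrix}\partial_1\Phi_\al & \partial_\al\partial_1\Phi_\al\\ \partial_2\Phi_\al & \partial_\al\partial_2\Phi_\al\end{pmatrix}=|\nabla\Phi_\al(x)|^2\,\partial_\al\theta_\al(x).
\]
One can also see this directly: it is the formula for the derivative of the argument of the planar curve $\al\mapsto\nabla\Phi_\al(x)$.

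Finally, divide by $|\nabla\Phi_\al(x)|^2\approx 1$. Combined with \eqref{jacobian}, this gives $|\partial_\al\theta_\al(x)|\approx 1$ uniformly in $\al\in I$ and $x\in U$, which is \eqref{eq:transversal}.

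The only step needing care is the justification that $\theta_\al(x)$ is differentiable in $\al$. Since $\theta$ is only defined modulo $\pi$ (or modulo $2\pi$), this has to be done via a local branch. The point is that $\nabla\Phi_\al(x)$ is $C^1$ in $\al$ and never vanishes, so the normalized vector $\nabla\Phi_\al/|\nabla\Phi_\al|$ is a $C^1$ curve on the circle, and it lifts locally to a $C^1$ angle function. Everything else in the argument is an algebraic identity.
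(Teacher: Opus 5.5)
Your proof is correct and is essentially the paper's argument. Both establish the identity $\det(\nabla\Phi_\al,\partial_\al\nabla\Phi_\al)=|\nabla\Phi_\al(x)|^2\,\partial_\al\theta_\al(x)$ and then apply the two-sided bounds on $|\nabla\Phi_\al|$ and on the determinant; the paper reaches the identity by differentiating the orthogonality relation $\nabla\Phi_\al\cdot u_\al^\perp=0$ with $u_\al^\perp=(\sin\theta_\al,-\cos\theta_\al)$ rather than the polar components, and your justification of a local $C^1$ branch of $\theta_\al$ is a point the paper leaves implicit.
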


\begin{proof}
Let
$$u_\al(x)=(\cos\theta_\al(x),\sin\theta_\al(x)),\ \ 
u_\al^\perp(x)= (\sin\theta_\al(x),-\cos\theta_\al(x)).
$$  
Then
\begin{align*}
\partial_\al u^\perp_\al(x)
&=(\partial_\al\theta_\al(x))u_\al(x).
\end{align*}
Since $u^\perp_\al(x)$ is orthogonal to $\nabla\Phi_\al(x)$, we have
\[
0=\nabla\Phi_\al(x)\cdot u^\perp_\al(x).
\]
Differentiating the first equation, and using that
$\nabla\Phi_\al(x)\cdot u_\al(x)=|\nabla\Phi_\al(x)|$,
we get
\begin{align*}
0&=\partial_\al\nabla\Phi_\al(x)\cdot u^\perp_\al(x)+\nabla\Phi_\al(x)\cdot (\partial_\al\theta_\al(x))u_\al(x) \\
&=\partial_\al\nabla\Phi_\al(x)\cdot u^\perp_\al(x)+(\partial_\al\theta_\al(x))|\nabla\Phi_\al(x)|.
\end{align*}
Rearranging and applying (\ref{gradient}) and (\ref{jacobian}), we have
\[
|\partial_\al\theta_\al(x)|=\frac{|\partial_\al\nabla\Phi_\al(x)\cdot u^\perp_\al(x)|}{|\nabla\Phi_\al(x)|}
=\frac{1}{|\nabla\Phi_\al(x)|^2}\left|\det
\begin{pmatrix}
\partial_1 \Phi_\al(x) & \partial_\al\partial_1 \Phi_\al(x) \\
\partial_2 \Phi_\al(x) & \partial_\al\partial_2 \Phi_\al(x)
\end{pmatrix}
\right| \approx 1.
\]
\end{proof}

\section{Local comparison of linear and nonlinear projections}\label{sec-localcomparison}
In this section, we build criteria that allow us to compare the curved projections to the orthogonal projections on small squares (locally).  This requires a number of technical ideas developed in Lemmas \ref{linesegmentbound}--\ref{fiberrectangle}, before stating our main result, Lemma \ref{CompareProjections}.

Throughout this section, we assume that $V\subset U\subset\R^2$, where $U$ is open and $V$ is compact. 
Recall the definition of $\theta_\alpha(x)$ from \eqref{defn: theta alpha}.

The scale $\delta^2$ appears throughout and arises naturally from the $C^2$ regularity of the
fibers. Indeed, a curve with uniformly bounded curvature deviates from
its tangent line by at most $O(\delta^2)$ over an interval of length
$\delta$. Hence fiber segments of length $\delta$ are naturally
contained in rectangles of dimensions $\delta\times\delta^2$, which
serve as the fundamental geometric objects in the arguments that
follow.

\begin{lem}[Bounding projections of line segments]
\label{linesegmentbound}
Let $L\subset U$ be a closed line segment in direction $\theta^\perp$, and let $\alpha\in I$.  If $\Phi\in C^2(I\times U)$ satisfies (\ref{gradient}), then
\[
\inf_{\xi\in L}|\theta_\alpha(\xi)-\theta|\lesssim \frac{|\Phi_\alpha(L)|}{|L|}\lesssim \sup_{\xi\in L}|\theta_\alpha(\xi)-\theta|.
\]
\end{lem}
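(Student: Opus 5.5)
Parametrize the segment $L$ by arclength: $\gamma(t)=p+t\,e$ for $t\in[0,|L|]$, where $e=(\sin\theta,-\cos\theta)$ is the unit vector in direction $\theta^\perp$. Then $f(t)=\Phi_\alpha(\gamma(t))$ is a $C^1$ (indeed $C^2$) function on $[0,|L|]$, and $\Phi_\alpha(L)=f([0,|L|])$ is an interval. The plan is to compute $f'$ explicitly in terms of the angle between $\nabla\Phi_\alpha$ and $\theta$, and then read off both bounds.

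By the chain rule and the definition \eqref{defn: theta alpha},
\[
f'(t)=\nabla\Phi_\alpha(\gamma(t))\cdot e=|\nabla\Phi_\alpha(\gamma(t))|\bigl(\cos\theta_\alpha(\gamma(t))\sin\theta-\sin\theta_\alpha(\gamma(t))\cos\theta\bigr)=-|\nabla\Phi_\alpha(\gamma(t))|\sin\bigl(\theta_\alpha(\gamma(t))-\theta\bigr).
\]
By \eqref{gradient}, $|f'(t)|\approx|\sin(\theta_\alpha(\gamma(t))-\theta)|$. Here the quantity $|\theta_\alpha(\xi)-\theta|$ should be read as the distance in $\R/\pi\mathbb{Z}$ (directions), and for that distance $|\sin(\cdot)|\approx|\cdot|$. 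This identification is the one point that needs care.

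\textbf{Upper bound.} By the mean value theorem, $|\Phi_\alpha(L)|=\max f-\min f\le\int_0^{|L|}|f'|\lesssim |L|\sup_{\xi\in L}|\theta_\alpha(\xi)-\theta|$.

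\textbf{Lower bound.} $|\Phi_\alpha(L)|\ge|f(|L|)-f(0)|=\bigl|\int_0^{|L|}f'\bigr|$, and we want this to be at least a constant times $|L|\inf_{\xi}|\sin(\theta_\alpha(\xi)-\theta)|$. That requires $f'$ not to change sign on $[0,|L|]$. If $f'$ vanishes somewhere, the infimum is $0$ and the bound is trivial. Otherwise $f'$ is continuous and nonvanishing, so it has constant sign. Then $\bigl|\int f'\bigr|=\int|f'|\ge|L|\inf|f'|\gtrsim|L|\inf_\xi|\theta_\alpha(\xi)-\theta|$.

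The main obstacle is therefore only the sign issue in the lower bound, which the continuity and no-zero dichotomy resolves, together with the bookkeeping that angles are taken modulo $\pi$. That convention makes $|\sin|$ comparable to the angular distance with absolute constants, and it is what the projective identification of directions in Section~\ref{notation} intends.
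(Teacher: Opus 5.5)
Your proof is correct and follows essentially the same route as the paper. Both arguments compute the derivative of $\Phi_\alpha$ along $L$ as $\pm|\nabla\Phi_\alpha|\sin(\theta_\alpha-\theta)$ and get the upper bound from the mean value theorem. Both get the lower bound by noting that if the infimum is positive the derivative never vanishes, so $\Phi_\alpha$ is monotone along $L$ and the endpoints realize $|\Phi_\alpha(L)|$; the paper phrases this as injectivity, you as constant sign of $f'$. Your remark that $|\theta_\alpha(\xi)-\theta|$ must be read modulo $\pi$ for $|\sin|$ to be comparable to it is correct, and the paper leaves it implicit.
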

\begin{proof}
Let $x,y\in L$ be such that $|\Phi_\al(x)-\Phi_\al(y)|\approx|\Phi_\al(L)|$.  By the mean value theorem, there exists $\xi\in L$ such that
\[
|\Phi_\al(L)|\approx |\nabla\Phi_\al(\xi)\cdot (x-y)|.
\]
The vector $x-y$ is in direction $\theta^\perp$ since both points lie on $L$, and the vector $\nabla\Phi_\al(\xi)$ is parallel to $\theta_\alpha(\xi)$.  We have
\begin{align*}
|\Phi_\al(L)|&\approx |\nabla\Phi_\al(\xi)||x-y|\cos(\theta_\al(\xi)-\theta^\perp) \\[.1in]
&=|\nabla\Phi_\al(\xi)||x-y|\sin(\theta_\al(\xi)-\theta).
\end{align*}
The upper bound in the lemma follows immediately by (\ref{gradient}) and the trivial bound $|x-y|\leq |L|$.  

To prove the lower bound, we may assume without loss of generality that 
\begin{equation}\label{eq:4nonzero}
\inf_{\xi\in L}|\theta_\al(\xi)-\theta|>0.
\end{equation}
Let $a,b\in U$ denote the endpoints of $L$.  We claim that if (\ref{eq:4nonzero}) holds, then the map $[0,1]\to \R$ defined by $t\mapsto \Phi_\al(a+t(b-a))$ is injective. Indeed, its derivative is 
$$
\nabla\Phi_\al(a+t(b-a)) \cdot (b-a).
$$
If this were zero at some $t_0\in(0,1)$, it would follow that $\nabla\Phi_\al(\xi)$ is perpendicular to $b-a$ at $\xi:=a+t_0(b-a)$. Since $b-a$ has direction $\theta^\perp$, 
it would follow that $\nabla\Phi_\al(\xi)$ would have direction $\theta$. But that contradicts (\ref{eq:4nonzero}), since $\theta_\al(\xi)$ is the direction of $\nabla \Phi_\al(\xi)$ and $\xi$ is a point on $L$.  

Therefore we can choose $x,y\in L$ satisfying both $|\Phi_\al(x)-\Phi_\al(y)|\approx|\Phi_\al(L)|$ and $|x-y|\approx |L|$. (In fact, we may choose $x=a$ and $y=b$.) The lower bound then follows from the same calculation as the upper bound.
\end{proof}

We will be interested in the projections of rectangles $R$ of dimensions $\de\times \de^2$, where $\de$ is a small parameter.  The upper bound in Lemma \ref{linesegmentbound}, together with the triangle inequality, implies the trivial upper bound $|\Phi_\al(R)|\lesssim \de$.  On the other hand, $R$ contains line segments $L$ of length $\de^2$ in every direction, so the lower bound in Lemma \ref{linesegmentbound} implies $|\Phi_\al(R)|\gtrsim \de^2$.  In order to achieve the lower bound, we need $(\theta_\al(z)-\theta)$ to be small throughout $z\in R$; that is, we need the long side of $R$ to be approximately tangent to the level curves of $\Phi_\al$ passing through it.  This is captured precisely in the following lemma.

\begin{lem}[``Good'' rectangles have small projections]
\label{goodrectangles}
Suppose $\Phi\in C^2(I\times U)$ satisfies (\ref{gradient}) and (\ref{seconddv}).  Let $R$ be a rectangle of dimensions $\delta\times\delta^2$ such that $B_\delta(R)\subset U$, let $\theta^\perp$ be the direction of $R$, and let $\alpha\in I$.  If $\de$ is sufficiently small and $\theta=\theta_\al(u)$ for some $u\in B_\delta(R)$, then
\[
|\Phi_\alpha(R)|\lesssim \de^2
\]
\end{lem}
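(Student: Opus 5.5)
Decompose the rectangle into a long segment along its direction plus short transversal segments, and estimate the projection of each piece with Lemma \ref{linesegmentbound}.

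\textbf{Decomposition.} Let $L_0$ be the central long axis of $R$. It is a segment of length $\de$ in direction $\theta^\perp$. Every point of $R$ lies on a segment of length at most $\de^2$ perpendicular to $L_0$ that meets $L_0$. Hence $\Phi_\al(R)$ is contained in the $\lesssim\de^2$-neighbourhood of $\Phi_\al(L_0)$, since $|\nabla\Phi_\al|\lesssim 1$ by (\ref{gradient}) and $\Phi_\al$ is Lipschitz on these short segments. Therefore
\[
|\Phi_\al(R)|\le |\Phi_\al(L_0)|+O(\de^2),
\]
and it suffices to show $|\Phi_\al(L_0)|\lesssim\de^2$.

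\textbf{The long axis.} By the upper bound in Lemma \ref{linesegmentbound},
\[
|\Phi_\al(L_0)|\lesssim \de\,\sup_{\xi\in L_0}|\theta_\al(\xi)-\theta|,
\]
so we need $|\theta_\al(\xi)-\theta|\lesssim\de$ for all $\xi\in L_0$. Write $\theta=\theta_\al(u)$ with $u\in B_\de(R)$. Then $|\xi-u|\lesssim\de$, and the segment joining them lies in $B_\de(R)\subset U$ (after enlarging slightly, or since $B_\de(R)$ is convex).

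\textbf{Lipschitz bound on $\theta_\al$.} This step is the only one needing care. The map $x\mapsto \nabla\Phi_\al(x)/|\nabla\Phi_\al(x)|$ has derivative bounded by
\[
\frac{2\,|D^2\Phi_\al|}{|\nabla\Phi_\al|}\lesssim 1
\]
using (\ref{seconddv}) and the lower bound in (\ref{gradient}). So the unit normal field is Lipschitz with constant $O(1)$. Since angle distance in $\R/\pi\mathbb{Z}$ is comparable to the chordal distance of unit vectors once they are close, we get $|\theta_\al(\xi)-\theta_\al(u)|\lesssim|\xi-u|\lesssim\de$. This comparability is where "$\de$ sufficiently small" is used: it keeps the angles in the small-angle regime and avoids wraparound of the identification. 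We also interpret $|\theta_\al(\xi)-\theta|$ as a distance on directions, matching the $\sin$ appearing in the proof of Lemma \ref{linesegmentbound}.

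\textbf{Conclusion.} Combining the steps gives $|\Phi_\al(L_0)|\lesssim\de\cdot\de=\de^2$, and hence $|\Phi_\al(R)|\lesssim\de^2$, with constants depending only on the bounds in (\ref{gradient}) and (\ref{seconddv}).
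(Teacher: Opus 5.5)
Your proof is correct and follows the paper's argument: you reduce to a long segment of $R$ at cost $O(\delta^2)$, then combine the upper bound of Lemma~\ref{linesegmentbound} with the Lipschitz estimate $|\theta_\alpha(\xi)-\theta_\alpha(u)|\lesssim|\xi-u|\lesssim\delta$. The paper uses a long side of $R$ rather than the central axis. Your added remarks fill in details the paper leaves implicit: the Lipschitz bound on $\theta_\alpha$ also needs the lower bound in (\ref{gradient}), and $B_\delta(R)$ is convex, so the segment from $\xi$ to $u$ stays in $U$.
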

\begin{proof}
Let $L$ be one of the two sides of $R$ of length $\de$.  For any $x\in L,y\in R$, let $z\in L$ be such that the segment $\overline{yz}$ is perpendicular to $L$.  Then $|y-z|\leq \de^2$, so
\[
|\Phi_\alpha(x)-\Phi_\alpha(y)|\leq |\Phi_\alpha(x)-\Phi_\alpha(z)|+O(\de^2),
\]
hence
\[
|\Phi_\alpha(R)|\leq |\Phi_\alpha(L)|+O(\de^2).
\]
By (\ref{seconddv}), the mapping $x\mapsto \theta_\al(x)$ has bounded derivatives. Hence, for any $\xi\in L$ we have 
\[
|\theta_\al(\xi)-\theta|=|\theta_\al(\xi)-\theta_\al(u)|\lesssim |\xi-u|<2\de.
\]
By Lemma \ref{linesegmentbound}, this implies $|\Phi_\al(L)|\lesssim \de^2$. The lemma follows.
\end{proof}

\begin{lem}[The neighborhood of a preimage is the preimage of a neighborhood]
\label{commutativity}
Suppose $\Phi\in C^2(I\times U)$ satisfies (\ref{gradient}), (\ref{seconddv}),  (\ref{jacobian}), and let $\al\in I,y\in\Phi_\al(V)$.  There exist constants $c,c',\epsilon_0>0$, depending only on $\Phi$ and $V$, such that for any $0<\epsilon<\epsilon_0$ we have
\[
B_{c\e}\left(\Phi_\al^{-1}(y)\right)\subset \Phi_\al^{-1}(B_\e(y))\subset B_{c'\e}\left(\Phi_\al^{-1}(y)\right).
\]
\end{lem}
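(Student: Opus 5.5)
The two inclusions are proved separately, and both reduce to the fact that $\Phi_\al$ is Lipschitz with gradient bounded above and below on a neighborhood of $V$. Fix a compact set $V'$ with $V\subset \operatorname{int} V'\subset V'\subset U$, and let $\eta=\operatorname{dist}(V,\partial V')>0$. By (\ref{gradient}) there are constants $0<m\le M$ with $m\le|\nabla\Phi_\al(x)|\le M$ for all $\al\in I$ and $x\in V'$. By (\ref{seconddv}), the unit vector field $u_\al=\nabla\Phi_\al/|\nabla\Phi_\al|$ is Lipschitz on $V'$, uniformly in $\al$. Condition (\ref{jacobian}) is not needed here.

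For the first inclusion, let $z\in B_{c\e}(\Phi_\al^{-1}(y))$ and pick $x\in\Phi_\al^{-1}(y)$ with $|z-x|<c\e$. When $\e_0$ is small, the segment $\overline{xz}$ lies in a neighborhood of $V$ where the gradient bound holds. (We use that $y\in\Phi_\al(V)$ and restrict attention to the part of the fiber near $V$.) The mean value theorem then gives $|\Phi_\al(z)-y|\le M|z-x|<Mc\e$. Choosing $c=1/M$ yields $z\in\Phi_\al^{-1}(B_\e(y))$.

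For the second inclusion, take $z$ with $|\Phi_\al(z)-y|<\e$, restricted to $z$ near $V$. Follow the gradient flow $\dot\gamma=\pm u_\al(\gamma)$ from $z$, with the sign chosen to move $\Phi_\al$ toward $y$. Along this flow, $\frac{d}{dt}\Phi_\al(\gamma(t))=\pm|\nabla\Phi_\al(\gamma(t))|$, whose absolute value is at least $m$. So after time at most $\e/m$ the flow reaches a point $x$ with $\Phi_\al(x)=y$, by the intermediate value theorem. Since the flow has unit speed, $|x-z|\le\e/m$, and we may take $c'=1/m$. The flow exists and stays in $V'$ for this time provided $\e_0/m<\eta$, because $u_\al$ is Lipschitz. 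Alternatively, one can avoid the ODE and move along the straight segment $z+t\,u_\al(z)$. Since $u_\al$ is Lipschitz, over length $O(\e)$ the directional derivative stays at least $m/2$, which gives the same conclusion with $c'=2/m$.

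The main obstacle is bookkeeping the domain. The fiber $\Phi_\al^{-1}(y)$ may leave $V$, and points of $\Phi_\al^{-1}(B_\e(y))$ far from $V$ are not controlled. I would therefore interpret both sides as intersected with a fixed neighborhood of $V$ (or with $V$ itself), which is how the lemma will be used on squares $Q\subset V$. Then $\e_0$ is chosen so that all $O(\e)$-neighborhoods and flow lines stay inside $V'$. The constants $c$, $c'$, $\e_0$ then depend only on $m$, $M$, $\eta$ and the second-derivative bound, hence only on $\Phi$ and $V$.
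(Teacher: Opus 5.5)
Your proof is correct and follows the paper's argument. The first inclusion is the same Lipschitz/mean-value estimate. Your straight-segment alternative for the second inclusion is exactly the paper's proof with $c'=2/\min|\nabla\Phi_\al|$: step a distance $2\e/m$ along $\pm\nabla\Phi_\al(z)/|\nabla\Phi_\al(z)|$, use the second-derivative bound to keep the directional derivative above $m/2$, and conclude by the intermediate value theorem. The gradient-flow version is a harmless variant that gives $c'=1/m$.

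Your remarks that (\ref{jacobian}) is unused, and that both sides must be localized to a neighborhood of $V$ so all segments stay in $U$, are correct. They address bookkeeping the paper leaves implicit.
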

\begin{proof}
If $x'\in B_{c\e}\left(\Phi_\al^{-1}(y)\right)$, then there exists $x$ such that $\Phi_\al(x)=y$ and $|x-x'|<c\e$.  By (\ref{gradient}),
we have
\[
|y-\Phi_\al(x')|=|\Phi_\al(x)-\Phi_\al(x')|\lesssim |x-x'|,
\]
so that $|y-\Phi_\al(x')|<\epsilon$ 
if $c$ is sufficiently small depending on the implicit constant above.
Hence $x'\in \Phi_\al^{-1}(B_\e(y))$, proving the first inclusion in the lemma.  

To prove the second inclusion, suppose $x'\in \Phi_\al^{-1}(B_\e(y))$, or $|y-\Phi_\al(x')|<\e$.  Without loss of generality, suppose $\Phi_\al(x')<y$ (the other case is similar).  Consider the point
\[
x''=x'+c'\e\frac{\nabla \Phi_\al(x')}{|\nabla\Phi_\al(x')|},
\]
where $c'$ will be determined later.  By the mean value theorem, there is a $\xi$ on the line segment connecting $x'$ and $x''$ which satisfies
\begin{align*}
\Phi_\al(x'')&=\Phi_\al(x')+\nabla \Phi_\al(\xi)\cdot (x''-x') \\[.1in]
&=\Phi_\al(x')+c'\e\frac{\nabla \Phi_\al(\xi)\cdot\nabla \Phi_\al(x')}{|\nabla\Phi_\al(x')|} \\[.1in]
&=\Phi_\al(x')+c'\e\left(|\nabla \Phi_\al(x')|+\frac{\nabla \Phi_\al(x')}{|\nabla\Phi_\al(x')|}\cdot(\nabla \Phi_\al(\xi)-\nabla\Phi_\al(x'))\right).
\end{align*}
We have
\[
\left|\frac{\nabla \Phi_\al(x')}{|\nabla\Phi_\al(x')|}\cdot(\nabla \Phi_\al(\xi)-\nabla\Phi_\al(x'))\right|=|\nabla \Phi_\al(\xi)-\nabla\Phi_\al(x')|
\leq C c'\epsilon,
\]
where $C$ depends only on the bounds on the second-order derivatives of $\Phi_\alpha$ in $x$. 
Let $c'=\frac{2}{\min |\nabla\Phi_\al|}$, and let $\epsilon>0$ be small enough so that
$$
Cc'\epsilon<
\frac{1}{2}\min |\nabla \Phi_\al|.
$$
Then
\[
\Phi_\al(x'')\geq\Phi_\al(x')+\frac{|\nabla \Phi_\al(x')|}{2}c'\e,
\]
and therefore
\[
\Phi_\al(x')< y<\Phi_\al(x')+\e\leq \Phi_\al(x'').
\]
We conclude that there exists $x$ between $x'$ and $x''$ such that $\Phi_\al(x)=y$. Furthermore,
\[
|x-x'|<|x''-x'|=c'\e,
\]
hence
\[
x'\in B_{c'\e}(x)\subset B_{c'\e}(\Phi_\al^{-1}(y)).
\]
\end{proof}

\begin{lem}[Fibers are contained in rectangles]
\label{fiberrectangle}
Assume $\Phi\in C^2(I\times U)$ satisfies (\ref{gradient}) and (\ref{seconddv}).  Let $\alpha\in I,y\in \Phi_\al(U)$, let $Q\subset V$ be a square of side length $\de$, and let $\Gamma$ be a connected component of $Q\cap \Phi_\alpha^{-1}(y)$.  If $\delta$ is sufficiently small (depending only on $\Phi,V,I$), then for any $\theta\in \theta_\al(Q)$ there exists a rectangle $R$ in direction $\theta^\perp$ of dimensions $\approx \de\times\de^2$ such that $\Gamma\subset R$.
\end{lem}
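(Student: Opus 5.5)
The plan is to show that $\Gamma$ stays within $O(\de^2)$ of the line through a point of $\Gamma$ in direction $\theta^\perp$, and also lies in $Q$. Together these give $\Gamma\subset R$.

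Fix $x_0\in\Gamma$ and $\theta\in\theta_\al(Q)$, say $\theta=\theta_\al(u)$ with $u\in Q$. By (\ref{seconddv}) and (\ref{gradient}), the map $x\mapsto\theta_\al(x)$ is Lipschitz with a uniform constant. Hence
\[
|\theta_\al(x)-\theta|\lesssim |x-u|\lesssim\de \quad\text{for all } x\in Q .
\]
So along all of $\Gamma$ the normal direction $\theta_\al(x)$ differs from $\theta$ by $O(\de)$, which means the tangent direction $\theta_\al(x)^\perp$ of the level curve differs from $\theta^\perp$ by $O(\de)$. Let $e=(\cos\theta,\sin\theta)$ be the unit normal to the desired rectangle. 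I want to bound the quantity $(x-x_0)\cdot e$ for $x\in\Gamma$.

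The cleanest route avoids parametrizing $\Gamma$ and works directly with $\Phi_\al$. For $x\in\Gamma$ we have $\Phi_\al(x)=\Phi_\al(x_0)=y$. Taylor's theorem with (\ref{seconddv}) gives
\[
0=\Phi_\al(x)-\Phi_\al(x_0)=\nabla\Phi_\al(x_0)\cdot(x-x_0)+O(|x-x_0|^2),
\]
and $|x-x_0|\le\sqrt2\,\de$ since both points lie in $Q$. Hence $|\nabla\Phi_\al(x_0)\cdot(x-x_0)|\lesssim\de^2$. By (\ref{gradient}), this says the component of $x-x_0$ along the unit vector $u_\al(x_0)=(\cos\theta_\al(x_0),\sin\theta_\al(x_0))$ is $O(\de^2)$. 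Next compare $e$ with $u_\al(x_0)$, using $|e-u_\al(x_0)|\lesssim|\theta_\al(x_0)-\theta|\lesssim\de$. This gives only
\[
|(x-x_0)\cdot e|\lesssim \de^2+\de\cdot\de\approx\de^2 ,
\]
where the second term uses $|x-x_0|\lesssim\de$. This is exactly the needed bound. Connectedness of $\Gamma$ is not even required, and neither is $\de$ being small beyond keeping $Q$ inside the region where the bounds hold, together with the directions being well defined.

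It remains to build $R$. Every point of $\Gamma$ lies in the strip of width $C\de^2$ around the line through $x_0$ in direction $\theta^\perp$. Since $\Gamma\subset Q$, the projection of $\Gamma$ onto that line lies in an interval of length at most $\sqrt2\,\de$. Take $R$ to be the strip intersected with the slab of length $\sqrt2\,\de$ orthogonal to $e$. Then $R$ is a rectangle in direction $\theta^\perp$ with dimensions $\sqrt2\de\times 2C\de^2\approx\de\times\de^2$, and $\Gamma\subset R$.

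The main point to check carefully is that the implicit constants are uniform in $\al,x,Q$. The Lipschitz bound on $\theta_\al$ requires $|\nabla\Phi_\al|\gtrsim1$, which comes from (\ref{gradient}), together with bounded Hessian. The Taylor remainder needs the segment $[x_0,x]$ to lie in $U$; this holds because $Q$ is convex and $Q\subset V\subset U$. Smallness of $\de$ (below $\de_0$ depending on $V,\Phi$) is used only to ensure $\de^2\le\de$, so that the dimensions are genuinely $\approx\de\times\de^2$.
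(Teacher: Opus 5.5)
Your proof is correct, and it takes a more direct route than the paper's. The paper first uses uniform continuity to arrange $\partial_2\Phi_\al\gtrsim 1$ on $Q$. It then uses the implicit function theorem, together with the connectedness of $\Gamma$, to write $\Gamma$ as the graph of a $C^2$ function $f$ over a compact interval $J$ with $|J|\le\de$. Implicit differentiation gives $|f'|,|f''|\lesssim 1$, and Taylor's theorem applied to $f$ places $\Gamma$ in a rectangle of dimensions $\approx|J|\times|J|^2$ along the tangent direction at a point of $\Gamma$. Finally the paper rotates this rectangle by the angle $\theta-\theta_\al(x)=O(\de)$ and enlarges it by a constant factor.

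You instead apply second-order Taylor expansion to $\Phi_\al$ itself along the segment $[x_0,x]\subset Q$. This gives $|u_\al(x_0)\cdot(x-x_0)|\lesssim\de^2$ at once, and you then pay $O(\de)\cdot O(\de)$ to replace the normal $u_\al(x_0)$ by $e$. The paper's rotation step becomes this one-line perturbation, and no parametrization of the fiber is needed.

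What your route buys is generality. Since it never uses connectedness, the whole set $Q\cap\Phi_\al^{-1}(y)$ lies in a single $\de\times\de^2$ rectangle. So in the lower-bound half of Lemma \ref{CompareProjections}, the families $\cR_k$ can be single rectangles, and hypothesis (\ref{components}) is not needed for that step. The same inequality,
\[
|\nabla\Phi_\al(x_0)\cdot(x-x_0)|\le|\Phi_\al(x)-y|+O(\de^2),
\]
also covers every $x\in Q$ with $|\Phi_\al(x)-y|\lesssim\de^2$. It could therefore replace the appeal to Lemma \ref{commutativity} there as well.

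The paper's graph description gives somewhat more structural information: each component is a $C^2$ arc of bounded curvature, sitting in a rectangle of dimensions $\approx|J|\times|J|^2$ adapted to its actual length. Only the $\de\times\de^2$ containment is used later, though.
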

\begin{proof}
For notational simplicity, let $\pi=\pi_{0}$ denote projection onto the horizontal axis.  By (\ref{gradient}), we may assume without loss of generality that there exists $x_0\in Q$ and such that $\partial_2\Phi_\al(x_0)\gtrsim 1$, with constant independent of $Q$.  If $\de$ is sufficiently small, then by uniform continuity we have $\partial_2\Phi_\al\gtrsim 1$ on $Q$.  This implies that $\Gamma$ is the graph of some function $f:\pi(\Gamma)\to \R$.  Indeed, if this were not the case, then there would exist $x,x'\in \Gamma$ with $\pi(x)=\pi(x')$, and this would imply $\partial_2\Phi_\al(\xi)=0$ for some $\xi\in Q$, a contradiction.  We observe that since $\Gamma$ is compact and connected, $\pi(\Gamma)$ is a compact interval of length at most $\de$, which we denote by $J$.  Finally, we note that the uniqueness part of the implicit function theorem implies that $f\in C^2(J)$, and
\[
f'(t)=\frac{-\partial_1\Phi_\al(t,f(t))}{\partial_2\Phi_\al(t,f(t))},
\]
hence $|f'|\lesssim 1$.  
By computing the second derivative directly and applying the bounds $|\partial_2\Phi_\al(t,f(t))|\gtrsim 1$ and $|f'|\lesssim 1$ together with (\ref{gradient}) and (\ref{seconddv}), we have $|f''(t)|\lesssim 1$.  
Since $|f''|\lesssim 1$, Taylor's theorem implies that if $T$
denotes the tangent line to the graph at any point of $\Gamma$, then
\[
|f(t)-T(t)|\lesssim |J|^2,
\qquad t\in J.
\]
Hence the graph deviates from its tangent line by at most
$O(|J|^2)$, which implies
there is a rectangle $R_0$ in the direction $\theta^\perp_\al(x)$ of dimensions $\approx |J|\times |J|^2$ which contains $\Gamma$ (since $|J|\leq \de$, we can take $R_0$ to have dimensions $\approx \de\times \de^2$ if desired).  Finally, if $\theta=\theta_\al(x')$ for any $x'\in U$ with $|x-x'|\lesssim \de$ (in particular, for any $x'\in Q$), then 
\[
|\theta-\theta_\al(x)|=\left|\frac{\nabla\Phi_\al(x')}{|\nabla\Phi_\al(x')|}-\frac{\nabla\Phi_\al(x)}{|\nabla\Phi_\al(x)|}\right|\lesssim \de.
\]
Rotating $R_0$ about one of its vertices by angle $\theta-\theta_\al(x)$ and multiplying the lengths on each side by a constant factor, we obtain a rectangle $R\supset R_0$ with dimensions $\approx \de\times\de^2$ in direction $\theta$.
\end{proof}

We now arrive at the heart of this section. 
The proof of Lemma \ref{CompareProjections}
proceeds by passing from one projection family to the other
through coverings by thin rectangles.

To prove the upper bound, we begin with a covering of
$\pi_\theta(E)$ by intervals. 
Pulling these intervals back through $\pi_\theta$
produces thin strips, which are then localized to thin
rectangles.
Since the long sides of these
rectangles are approximately tangent to the fibers of
$\Phi_\alpha$, Lemma \ref{goodrectangles} implies that their
$\Phi_\alpha$-projections are small.

The lower bound follows a similar strategy, but we begin instead
with a covering of $\Phi_\alpha(E)$. 
Here an additional difficulty arises because the fibers of $\Phi_\alpha$ are no
longer parallel lines. Using the Implicit Function Theorem, we
show that each connected component of a fiber may be locally
contained in a thin rectangle whose long side is approximately
tangent to that fiber. This then allows us to
reverse the argument above and compare the orthogonal
projections with the generalized projections.

\begin{lem}
\label{CompareProjections}
Assume that $\Phi\in C^2(I\times U)$ satisfies (\ref{gradient}) and (\ref{seconddv}). There exists $\de_0>0$ (depending only on $\Phi,V,I$) such that for any $0<\de<\de_0$, the following holds.  Let $Q\subset V$ be a compact square of side length $\de$, let $\cQ$ be a finite collection of squares which are contained in $Q$ and have side length $\de^2$, and let $E=\bigcup\cQ$.  For any $\alpha\in I$ and $\theta\in \theta_\al(Q)$,  we have
\[
|\Phi_\alpha(E)|\lesssim |\pi_{\theta}(E)|.
\]
If in addition $\Phi$ satisfies (\ref{jacobian}), and (\ref{components}), then
\[
|\Phi_\alpha(E)|\approx |\pi_{\theta}(E)|.
\]
\end{lem}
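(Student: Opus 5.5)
Both bounds go through covers by thin rectangles adapted to the squares of side $\de^2$, with Lemma \ref{goodrectangles} and Lemma \ref{fiberrectangle} as the main inputs.

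\textbf{Upper bound.} Fix $\alpha\in I$ and $\theta\in\theta_\al(Q)$, say $\theta=\theta_\al(u)$ with $u\in Q$. Since $E$ is a finite union of squares of side $\de^2$, the set $\pi_\theta(E)$ is a finite union of closed intervals.
\begin{enumerate}
\item Cover $\pi_\theta(E)$ by intervals $J_1,\dots,J_N$ of length $\de^2$ with $N\de^2\lesssim|\pi_\theta(E)|$. To get this, partition each component interval into pieces of length $\de^2$. A component has length at least about $\de^2$, since it contains the projection of a whole square, which has length between $\de^2$ and $\sqrt2\de^2$. So rounding up at most doubles the count.
\item The strip $\pi_\theta^{-1}(J_k)$ is orthogonal to $\theta$, so its long direction is $\theta^\perp$. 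Its intersection with $Q$ lies in a rectangle $R_k$ in direction $\theta^\perp$ of dimensions $\approx\de\times\de^2$, because $Q$ has diameter $\sqrt2\de$.
\item After enlarging $R_k$ by constant factors (harmless, by subadditivity and splitting into $O(1)$ rectangles of exact size $\de\times\de^2$), we have $u\in Q\subset B_\de(R_k)$. Also $B_\de(R_k)\subset U$ once $\de_0$ is below the distance from $V$ to $U^c$.
\item Lemma \ref{goodrectangles} then gives $|\Phi_\al(R_k)|\lesssim\de^2$.
\item Since $E\subset\bigcup_k R_k$, subadditivity gives
\[
|\Phi_\al(E)|\le\sum_k|\Phi_\al(R_k)|\lesssim N\de^2\lesssim|\pi_\theta(E)|.
\]
\end{enumerate}

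\textbf{Lower bound.} Here I additionally use (\ref{jacobian}) and (\ref{components}), and the roles of the two projections are swapped.
\begin{enumerate}
\item Cover $\Phi_\al(E)$ by intervals $B_{\de^2}(y_k)$, $k=1,\dots,M$, with $y_k\in\Phi_\al(E)$ and $M\de^2\lesssim|\Phi_\al(E)|$. This works because $\Phi_\al(E)$ is a finite union of intervals, each of length $\gtrsim\de^2$: each square contains a segment in direction $\theta_\al$ of length $\de^2$, and $|\nabla\Phi_\al|\approx1$.
\item By Lemma \ref{commutativity}, $E\cap\Phi_\al^{-1}(B_{\de^2}(y_k))$ lies in $B_{c'\de^2}(\Phi_\al^{-1}(y_k))$. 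Intersect this with a slightly enlarged square $Q'\supset Q$ of side $\approx\de$, still inside a compact set on which all bounds hold.
\item By (\ref{components}), $Q'\cap\Phi_\al^{-1}(y_k)$ has $O(1)$ components $\Gamma$.
\item By Lemma \ref{fiberrectangle}, each $\Gamma$ lies in a rectangle $R$ in direction $\theta^\perp$ of dimensions $\approx\de\times\de^2$. This uses $\theta\in\theta_\al(Q)\subset\theta_\al(Q')$.
\item The $c'\de^2$-neighborhood of such an $R$ is again a rectangle of the same type. Its $\pi_\theta$-projection therefore has length $\lesssim\de^2$, since its long side is perpendicular to $\theta$.
\item Hence $E$ is covered by $O(M)$ sets whose $\pi_\theta$-images have length $O(\de^2)$, which gives $|\pi_\theta(E)|\lesssim M\de^2\lesssim|\Phi_\al(E)|$.
\end{enumerate}

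\textbf{Main obstacle.} The delicate step is the boundary effect in the lower bound. A point of $E$ near $\partial Q$ lies within $c'\de^2$ of the fiber $\Phi_\al^{-1}(y_k)$, but the nearby piece of that fiber may leave $Q$. This is why I would work in $Q'$, or in $B_{C\de^2}(Q)$, and why (\ref{components}) must hold uniformly for slightly enlarged squares. If it only holds for $Q$ itself, I would instead proceed as follows. Take the nearest fiber point $x\in\Phi_\al^{-1}(y_k)$ with $|x-x'|\lesssim\de^2$. Then either $x\in Q$, or the fiber meets $\partial Q$ within $O(\de^2)$ of $x'$. In the second case, attach to the relevant component of $Q\cap\Phi_\al^{-1}(y_k)$ an $O(\de^2)$ extension. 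This keeps the containing rectangle of size $\approx\de\times\de^2$.
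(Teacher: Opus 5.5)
Your overall route is the paper's. You cover one projection at scale $\de^2$ (partitioning component intervals instead of the paper's Vitali selection, which is harmless) and pull back to rectangles of dimensions $\approx\de\times\de^2$ in direction $\theta^\perp$. You then use Lemma \ref{goodrectangles} for the upper bound and Lemmas \ref{commutativity} and \ref{fiberrectangle} for the lower bound. The upper bound is fine. The lower bound has a gap, exactly at the boundary step you flagged.

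As you note, step 3 needs (\ref{components}) on the enlarged square $Q'$. The lemma imposes it only on $Q$, so step 3 is not justified. Your fallback rests on a false claim. If the fiber point $x$ nearest to $x'\in Q$ lies outside $Q$, the fiber need not meet $\partial Q$ within $O(\de^2)$ of $x'$. For example, take $Q=[0,\de]^2$ and $\Phi_\al(z)=z_2+\kappa z_1^2$. The fiber $\{\Phi_\al=\kappa\de^2/4\}$ lies in $Q$ for $0\le z_1\le\de/2$, and runs $O(\de^2)$ below the bottom edge for $\de/2<z_1\le\de$. The point $x'=(3\de/4,0)\in Q$ has $|\Phi_\al(x')-\kappa\de^2/4|\lesssim\de^2$, yet the fiber meets $\partial Q$ only at distance $\ge\de/4$ from $x'$. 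So no $O(\de^2)$ extension of a component of $Q\cap\Phi_\al^{-1}(y_k)$ reaches the relevant fiber point.

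The missing idea is that near $Q$ the fiber is a single nearly straight curve, not merely a union of pieces. This is what justifies the paper's step of dilating the rectangles ($C\cR_k$ with $C$ large, combined with Lemma \ref{commutativity}) for points near $\partial Q$; the paper does not spell it out.

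Choose coordinates with $\partial_2\Phi_\al\gtrsim 1$ on a box $\mathcal{B}\supset B_\de(Q)$ of width $\approx\de$ and height $K\de$, with $K$ a large constant. Since $y_k\in\Phi_\al(Q)$, two facts combine:
\begin{itemize}
\item for each fixed $z_1$, the map $z_2\mapsto\Phi_\al(z_1,z_2)$ is strictly increasing with slope $\gtrsim 1$ across a height $K\de\gg\de$;
\item $|\Phi_\al-y_k|\lesssim\de$ at the box's central height.
\end{itemize}
By these and the implicit function theorem, $\Phi_\al^{-1}(y_k)\cap\mathcal{B}$ is the graph of one $C^2$ function $h$, defined on the entire horizontal side of $\mathcal{B}$, with $|h'|,|h''|\lesssim 1$ as in the proof of Lemma \ref{fiberrectangle}.

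Consequently, every fiber point within $c'\de^2$ of $Q$, whether inside $Q$ or not, lies together with its $c'\de^2$-neighbourhood in one rectangle of dimensions $\approx\de\times\de^2$ in direction $\theta^\perp$. For instance, a $C$-fold dilate of any rectangle from Lemma \ref{fiberrectangle} containing a component of $Q\cap\Phi_\al^{-1}(y_k)$ works. With this, your steps 2--6 go through with $Q$ itself. In fact one rectangle per $k$ suffices, so the component count from (\ref{components}) is not even needed.
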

\begin{proof}
Assume (\ref{gradient}) and (\ref{seconddv}).  We first prove the bound $|\Phi_\alpha(E)|\lesssim |\pi_{\theta}(E)|$.  We have
\begin{equation}
\label{pidecomposition}
\pi_{\theta}(E)=\bigcup_{Q'\in \cQ}\pi_\theta(Q'),
\end{equation}
where each $\pi_\theta(Q')$ is an interval of length $|\pi_\theta(Q')|\approx \de^2$.  
By the Vitali covering lemma, we may choose a subcollection $\{J_1,\dots,J_n\}$ of the collection $\{\pi_\theta(Q'):Q'\in \cQ\}$ consisting of intervals which are disjoint and satisfy
\[
\pi_\theta(E)\subset \bigcup_{k=1}^n 3J_k,
\]
where $3J_k$ is the interval with the same center as $J_k$ and triple the length.  
We also have, by (\ref{pidecomposition}), that
 \[
 \pi_\theta(E)\supset \bigsqcup_{k=1}^n J_k,
 \]
 with the union on the right being disjoint. 
It follows that
\[
\sum_{k=1}^n |J_k|\leq |\pi_\theta(E)|\leq \sum_{k=1}^n |3J_k|\approx \sum_{k=1}^n |J_k|,
\]
and so
\begin{equation}
\label{sizeofpi}
|\pi_\theta(E)|\approx \sum_{k=1}^n |J_k|\approx n\de^2.
\end{equation}
Each of the sets $\pi_\theta^{-1}(J_k)$ is an infinite strip in direction $\theta^\perp$ of width $|J_k|\approx \de^2$.  
Therefore, for each $k$, there exists a rectangle $R_k$ of dimensions $\approx \de\times \de^2$ which contains $\pi_\theta^{-1}(J_k)\cap Q$, and such that the family of rectangles $\{R_1,\dots,R_n\}$ is pairwise disjoint and pairwise parallel.  

Since $R_k$ contains $\pi_\theta^{-1}(J_k)\cap Q$, the enlarged
rectangle $3R_k$ contains $\pi_\theta^{-1}(3J_k)\cap Q$. Combining
this with the covering
\[
\pi_\theta(E)\subset \bigcup_{k=1}^n 3J_k
\]
yields
\[
E\subset \bigcup_{k=1}^n 3R_k,
\]
where $3R_k$ is a rectangle with the same center and same side length of the $\de$ side, with triple the length on the $\de^2$ side.  
By Lemma \ref{goodrectangles} and equation (\ref{sizeofpi}), we therefore have
\begin{align*}
|\Phi_\al(E)|&\leq \left|\Phi_\al\left(\bigcup_{k=1}^n 3R_k\right)\right| \\
&\leq \sum_{k=1}^n |\Phi_\al(3R_k)| \\
&\lesssim \sum_{k=1}^n \de^2 \\
&\approx |\pi_\theta(E)|.
\end{align*}

Next, we assume (\ref{gradient}), (\ref{seconddv}), (\ref{jacobian}), (\ref{components}), and we prove the reverse inequality using a similar argument, but we instead start with a covering of $\Phi_\alpha(E)$.  Observe 
\begin{equation}
    \label{phidecomposition}
\Phi_\alpha(E)=\bigcup_{Q'\in \cQ}\Phi_\al(Q'),
\end{equation}
and observe that each set $\Phi_\al(Q')$ is an interval.

We first prove that
$|\Phi_\al(Q')|\approx \delta^2$
for each square $Q'\in\mathcal Q$.
 Indeed, the upper bound $|\Phi_\al(Q')|\lesssim \de^2$ follows since $\Phi_\alpha$ is uniformly Lipschitz by (\ref{gradient}). On the other hand, $Q'$ contains a line segment $L$ of length $\approx\de^2$ in the direction $\theta$. If $\de$ is sufficiently small, we have
$|\theta_\al(\xi)-\theta^\perp|\geq 0.1$ for all $\xi\in Q'$. By Lemma \ref{linesegmentbound}, we have $|\Phi_\al(Q')|\gtrsim  \de^2$, and the claim is proved.

By the Vitali covering lemma, there is a subcollection $\{J_1,\dots,J_m\}\subset \{\Phi_\al(Q'):Q'\in \cQ,Q'\subset Q\}$ which is pairwise disjoint and satisfies
\begin{equation}
    \label{vitaliphi}
\bigcup_{k=1}^m J_k\subset \Phi_\al(E)\subset \bigcup_{j=1}^m 3J_k.
\end{equation}
The first inclusion in (\ref{vitaliphi}) implies
\begin{equation}
\label{sizeofphi}
|\Phi_\al(E)|\geq \sum_{k=1}^m |J_k|\approx m\de^2.
\end{equation}
For each $k$, let $y_k$ denote the center of $J_k$ (and hence also the center of $3J_k$).  By Lemmas \ref{fiberrectangle} and (\ref{components}), there exists a family $\cR_k$ of $O(1)$ rectangles in direction $\theta^\perp$, each of dimensions $\approx \de\times \de^2$, such that $\bigcup \cR_k\supset \Phi_\al^{-1}(y_k)\cap Q$. Let $C\cR_k=\{CR:R\in \cR_k\}$, where $CR$ denotes a rectangle centered at the same point and in the same direction as $R$, with dimensions increased by some large factor $C=O(1)$. By Lemma \ref{commutativity}, we have $\bigcup(C\cR_k)\supset \Phi_\al^{-1}(3J_k)\cap Q$ if $C$ is large enough.

If $x\in Q'$ for some $Q'\in\cQ$, then $\Phi_\al(x)\in 3J_k$ for some $k$, hence $Q'\subset \Phi_\al^{-1}(3J_k)\cap Q\subset \bigcup(C\cR_k)$.  By (\ref{sizeofphi}),
\[
|\pi_\theta(E)|\leq \sum_{k=1}^m\sum_{R\in\cR_k} |\pi_\theta(CR)|\lesssim m\de^2 \lesssim |\Phi_\al(E)|.
\]
\end{proof}

\section{Proof of the comparison theorems}\label{sec-globalcomparison}

We first prove the local comparison theorems and then deduce
the global comparison theorem for self-similar sets.

\begin{proof}[Proof of Theorem \ref{thm: mainlocal}]
First we assume (\ref{gradient}), (\ref{seconddv}), and (\ref{jacobian}), and prove the upper bound.  Let $\de_0$ be sufficiently small that Lemma \ref{CompareProjections} applies, let $Q$ be a compact square of side length $\de<\de_0$, and let $E\subset Q$ be a union of $N$ compact squares of side length $\de^2$.  For $j=0,1,\dots,n$ define
\[
\theta_j=\frac{j\pi}{n},
\]
where $n$ is a large parameter to be defined later.  By Lemma \ref{continuity}, if $n$ is sufficiently large, then
\[
\frac{1}{n}\sum_{j=1}^n |\pi_{\theta_j}(E)|\approx \Fav(E).
\]
Fix a point $x_0\in Q$.  For each $j$, define
\[
A_j=\{\alpha:\theta_\alpha(x_0)\in(\theta_{j-1},\theta_j]\}.
\]
Clearly, the sets $A_j$ partition the interval $I$.  By Lemma \ref{transversality}, the set $A_j$ is a union of $O(1)$ intervals of length $\approx \frac{1}{n}$ for each $j$.  By Lemma \ref{CompareProjections}, we have 
\begin{equation}
\label{upperbound}
|\Phi_\alpha(E)|\lesssim |\pi_{\theta_\al(x_0)}(E)| \hbox{ for each }\al\in A_j.
\end{equation}
We claim that the right side is $\lesssim |\pi_{\theta_j}(E)|$ if $n$ is large enough (depending on $N$ and $\de$). 
Indeed, by Lemma \ref{continuity}
\[
|\pi_{\theta_\al(x_0)}(E)| = |\pi_{\theta_j}(E)| + O\left( \frac{N}{n}\right).
\]
On the other hand, we have $|\pi_\theta(E)|\geq \de^2$ for every $\theta$, so that the main term above dominates the error term $O\left( \frac{N}{n}\right)$ if $n$ is sufficiently large. 

It follows that $|\Phi_\alpha(E)|\lesssim |\pi_{\theta_j}(E)|$ for each $\al\in A_j$, as claimed. Integrating on $A_j$, and using that $|A_j|\approx \frac{1}{n}$, we get
    \[
    \int_{A_j} |\Phi_\alpha(E)|d\alpha\lesssim\frac{1}{n}|\pi_{\theta_j}(E)|.
    \]
    Summing in $j$,
    \[
    \FavP(E)\lesssim \frac{1}{n}\sum_j|\pi_{\theta_j}(E)|\approx\Fav(E).
    \]
\end{proof}

\begin{proof}[Proof of Theorem \ref{thm: mainlocal-lower}]
Assume now that in addition to (\ref{gradient}), (\ref{seconddv}), and (\ref{jacobian}), we also have (\ref{components}) and (\ref{angle-range}). 
Let $\de_0,Q,E$ be as in the proof of Theorem \ref{thm: mainlocal}. Let also $\Theta=[t,t']$, and let $x_0\in Q$ be as in (\ref{angle-range}). For $j=0,1,\dots,n$, where $n$ is a large parameter to be chosen later, define
\[
\theta_j:=t+ \frac{j(t'-t)}{n},\ \ 
A_j:=\{\alpha:\theta_\alpha(x_0)\in(\theta_{j-1},\theta_j]\}.
\]
Then $\bigcup A_j=\Theta$ and, by Lemma \ref{transversality}, each $A_j$ is a union of $O(1)$ intervals of length $\approx \frac{|\Theta|}{n}$ for each $j$. By the same argument as in the proof of Theorem \ref{thm: mainlocal}, but with $\approx$ in (\ref{upperbound}) as provided by Lemma \ref{CompareProjections}, we have 
$$|\Phi_\alpha(E)|\approx  |\pi_{\theta_j}(E)|$$
for each $\al\in A_j$.
Integrating and then summing in $j$, we get
\begin{align*}
\FavP(E)&\gtrsim \sum_j \int_{A_j} |\Phi_\alpha(E)|d\alpha
\\
&\approx \sum_j \frac{|\Theta|}{n}|\pi_{\theta_j}(E)|
\\
&\approx \int_\Theta |\pi_\theta(E)|
\end{align*}
provided that $n$ was chosen large enough.
\end{proof}

\begin{proof}[Proof of Theorem \ref{thm:comparison}]
Let $E_n$ and $\de_0$ be as in Theorem \ref{thm:comparison}. Let $B=\max|b_i|$. We note that $E_n$ and $E$ are contained in the square $[-R,R]^2$, where
$$
R=(B+L^{-1})\sum_{j=0}^\infty L^{-j}\leq 2(B+1).
$$
Let also $m=\lfloor n/2\rfloor$, so that $n-m$ is equal to either $m$ or $m+1$ depending on the parity of $n$. 
Then $E_{n}$ is a union of $L^{n-m}$ similar copies $E_{m,j}$ of $E_m$, rescaled by a factor of $L^{-(n-m)}$ and is therefore contained in a square of side length at most $4(B+1)L^{-(n-m)}\leq 4(B+1)L^{-m}$.

Let $m$ be sufficiently large so that $\de:=4(B+1)L^{-m} <\de_0$. For $j=1,\dots,L^{n-m}$, let $Q_j$ be a $\delta$-square containing $E_{m,j}$.
Note that, with the choice $m=\lfloor n/2\rfloor$, each $E_{m,j}$ is contained in a $\delta$-square and is a union of $\delta^2$-squares, as required by Theorem \ref{thm: mainlocal}.
For each $j$, we apply Theorem \ref{thm: mainlocal} to the set $E_{m,j}$ to get the bound
\[
\FavP(E_{m,j})\lesssim \Fav(E_{m,j}).
\]
By \eqref{eq: fav linear},
\[
\Fav(E_{m,j})=\de\Fav(E_m).
\]
Combining these observations, and using the subadditivity of $\FavP$, we get
\begin{equation}\label{eq:subadd}
\begin{split}
\FavP(E_{n} )&\leq \sum_{j} \FavP(E_{m,j}) \\
&\lesssim \sum_{j} \Fav(E_{m,j}) \\
&= \de\sum_{j}\Fav(E_m) \\
&\approx \Fav(E_m), \\
\end{split}
\end{equation}
proving the upper bound of the theorem.

Lower bounds do not follow in the same manner since
the first inequality in (\ref{eq:subadd}) (subadditivity) is not reversible. 
Instead, we use an energy argument from \cite{BongersTaylor2023}. 
In preparation for that argument,
recall the condition (\ref{jacobian}):
$$
\left|
\det
\begin{pmatrix}
\partial_1 \Phi_\al(x) & \partial_\al\partial_1 \Phi_\al(x) \\
\partial_2 \Phi_\al(x) & \partial_\al\partial_2 \Phi_\al(x)
\end{pmatrix}
\right|\approx 1.
$$
Geometrically, the absolute value of the determinant in this condition is equal to the length of the cross product,
$$
|\nabla \Phi_\alpha \times \partial_\alpha \nabla \Phi_\alpha|
=|\nabla \Phi_\alpha|\,| \partial_\alpha \nabla \Phi_\alpha|\,\sin\beta_\alpha,
$$
where $\beta_\alpha$ is the angle between the two vectors. Thus there is a constant $0<c_1<1$ such that
\begin{equation}\label{lower-c2}
|\nabla \Phi_\alpha|\geq c_1,\ | \partial_\alpha \nabla \Phi_\alpha|\geq c_1,\ \sin\beta_\alpha\geq c_1.
\end{equation}
Let $0<\epsilon<c_1/20$ be small enough (to be chosen later). 
We may choose small convex sets $U'\subset U$ and $I'\subset I$ so that:
\begin{itemize}
\item $U'\cap E_n$ are approximations of a $1$-set,
\item for each $\alpha \in I'$, $\Phi_\al(x)$ is defined for each $x\in U'$, 
\item there are vectors $v,w\in \R^2$ such that
\begin{equation*}
|\nabla \Phi_\alpha -v|<\epsilon,\ \ 
   |\partial_\alpha \nabla \Phi_\alpha -w|<\epsilon, 
\end{equation*}
for all $\alpha\in I'$ and $x\in U'$. 
\end{itemize}
The last condition, together with (\ref{lower-c2}), implies that
\begin{equation}\label{lower-c1}
|v|\geq c_1/2,\ |w|\geq c_1/2,\ \sin\angle(v,w)\geq c_1/2.
\end{equation}

We would like to apply \cite[Theorem 1.5]{BongersTaylor2023} with $m=1$ and 
$$\{\widetilde{\pi_\al}: \al\in A\} = \{\Phi_\alpha: \alpha\in I\}.$$ 
To do this, we need to verify the transversality condition of \cite[Definition 1.3]{BongersTaylor2023} with $m=1$. In our notation, with $x$ and $\alpha$ restricted to $U'$ and $I'$ respectively, this condition says that 
there is a constant $c>0$ and a $\delta_0>0$ so that if $\delta\in (0,\delta_0]$ and $x,y\in U'$, then 
\begin{equation}\label{BT-condition}
\left| \left\{\alpha\in I': \frac{|\Phi_\alpha(x)-\Phi_\alpha(y)|}{|x-y|}<\delta \right\} \right|
\leq c\delta.
\end{equation}
Fix $x,y\in U'$. First, we use the (1-dimensional) mean value theorem to write
\begin{equation}\label{eq:mvt}
\frac{|\Phi_\alpha(x)-\Phi_\alpha(y)|}{|x-y|}=
|D_{u}\Phi_\alpha(z)|,
\end{equation}
where $u=\frac{x-y}{|x-y|}$ is the unit vector in the direction of $x-y$, $z=z(\alpha)$ is some point on the line segment from $x$ to $y$, and $D_{u}\Phi_\alpha(z)$ is the directional derivative at that point. We have
$$
D_{u}\Phi_\alpha(z)= \nabla \Phi_\alpha(z)\cdot u.
$$
Suppose that the set in (\ref{BT-condition}) is nonempty, so that $|\Phi_{\alpha_0}(x)-\Phi_{\alpha_0}(y)|<\delta|x-y|$ for some $\alpha_0\in I'$ and $\delta<\epsilon$. Then, with $z=z(\alpha_0)$,
$$
|v\cdot u| \leq |\nabla \Phi_{\alpha_0}(z)\cdot u|+\epsilon \leq 2\epsilon.
$$
In particular, 
$$
|\cos\angle(u,v) |\leq \frac{2\epsilon}{|v|}\leq \frac{4 \epsilon}{c_1}.$$
At the last step, we used the first part of (\ref{lower-c1}). Thus, $u$ and $v$ must be close to being perpendicular.
On the other hand, the last part of (\ref{lower-c1}) implies that $v$ and $w$ are separated in angle by at least $c_1/2$. Assuming that $\epsilon$ is small enough relative to $c_1$, it follows that $u$ and $w$ cannot be too close to being perpendicular. Quantitatively, 
\begin{equation}\label{lower-cos2}
|\cos\angle(u,w)|\geq \frac{c_1}{4},
\end{equation}
assuming that $\epsilon$ was chosen small enough.

By the mean value theorem again, for all $\alpha\in I'$ we have
\begin{align*}
&\left| \partial_\alpha \frac{\Phi_\alpha(x)-\Phi_\alpha(y)}{|x-y|}\right|
= \frac{|\partial_\alpha \Phi_\alpha(x)-\partial_\alpha \Phi_\alpha(y)|}{|x-y|}
\\
&= |D_u \partial_\alpha \Phi_\alpha(z')| 
= |\partial_\alpha \nabla \Phi_\alpha(z')\cdot u|,
\end{align*}
where $z'$ (not necessarily the same as $z$, and possibly depending on $\alpha$) lies on the line segment from $x$ to $y$. We have
$$
|\partial_\alpha \nabla \Phi_\alpha(z')\cdot u|
\geq |w\cdot u|-\epsilon \geq |w||\cos\angle(u,w) -\epsilon 
\geq \frac{c_1}{2}\cdot \frac{c_1}{4} -\epsilon 
\gtrsim 1.
$$
Thus, if the set in (\ref{BT-condition}) is nonempty for some $x,y\in U'$, it is contained in an interval of length $\lesssim \delta$.

With the transversality condition in \eqref{BT-condition} confirmed for all $\alpha\in I'$ and $x,y\in U'$, we now apply \cite[Theorem 1.5]{BongersTaylor2023} to the set $E_n\cap U'$. We get
$$\FavP(E_n\cap U')=\int_{I'} |\Phi_\al(E_n\cap U')|d\al \gtrsim \frac{1}{n}.$$
Since $\FavP(E_n)\geq \FavP(E_n\cap U')$, our lower bound follows.

\end{proof}

\begin{proof}[Proof of Theorem \ref{thm: CSS generalization}]
Let $L\geq 2$, and let $\{S_n\}_n$ be the sequence of sets constructed as described in Section \ref{subsec-prob_constructions}.  For each $n$, let $\cS_n\subset \cD_n$ be the set of squares making up $S_n$ (that is, $S_n=\bigcup\cS_n$). Thus, each $\cS_n$ contains $L^n$ squares of side length $L^{-n}$.  For any $n\in 2\N$ and any $Q\in \cD_{n/2}$, define $E_{n,Q}:=S_n\cap Q$.  Note that our notation does not require $Q\in\cS_n$; if $Q\in\cS_{n/2}$, then $E_{n,Q}$ is a finite union of squares of side length $L^{-n}$, and otherwise it is empty.

We start with a couple of simple probabilistic calculations.  We denote by $\mathbb{P}(A|B)$ the conditional probability of event $A$ given event $B$, and by $\E(X\,|\,B)$ the conditional expectation of random variable $X$ given $B$.  Our first observation is that for any $Q\in \cS_{n/2}$, the set $E_{n,Q}$ is (up to translation) a copy of another random realization of $S_{n/2}$ which has been scaled by a factor $L^{-n/2}$.  We therefore have
\begin{equation}
\label{eq: scale CSS}
\E[\Fav(E_{n,Q})\,|\,Q\in \cS_{n/2}]=L^{-n/2}\E[\Fav(S_{n/2})].
\end{equation}
Next, recall that our uniformity assumption on the probability distribution $\chi$ states that for any $Q\in \cD_1$, we have $\mathbb{P} (Q\in S_1)=L^{-1}$.  If $n>1$ and $Q\in \cD_n$, let $Q'\in \cD_{n-1}$ denote the ``parent'' of $Q$; that is, the unique square in $\cD_{n-1}$ containing $Q$ as a subset.  For $0\leq i<n$, define $Q^{(i)}\in \cD_{n-i}$ recursively by
\begin{align*}
Q^{(0)}&=Q, \\
Q^{(i+1)}&=(Q^{(i)})'.
\end{align*}
It follows from the definition of uniformity, and the assumption that the same distribution $\chi$ is used at each stage of the construction, that for any $n$ and any $Q\in \cD_n$ we have
\begin{equation}
\label{eq: probability calculation}
\mathbb{P}(Q\in \cS_n)=\mathbb{P}(Q^{(n-1)}\in \cS_1)\prod_{i=0}^{n-2}\mathbb{P}(Q^{(i)}\in \cS_{n-i}\,|\,Q^{(i+1)}\in \cS_{n-i-1})=L^{-n}.
\end{equation}
With these calculations in place, we are ready to complete the proof.  Let $\de_0$ be as in Theorem \ref{thm: mainlocal} with $V=[0,1]^2$, and fix $n_0$ be such that $L^{-n_0/2}<\de_0$.  Note that $\de_0$ depends only on $\Phi$ and $I$, so $n_0$ depends only on $\Phi,I,L$.  Let $n>n_0$ be even, and let $\de:=L^{-n/2}<\de_0$.   For any $Q\in\cS_{n/2}$, the set $E_{n,Q}$ is a finite union of squares of side length $L^{-n}=\de^2$, hence Theorem \ref{thm: mainlocal} applies.  On the other hand, if $Q\in \cD_{n/2}\setminus \cS_{n/2}$ then $E_{n,Q}$ is empty.  In either case, we have $\FavP(E_{n,Q})\lesssim \Fav(E_{n,Q})$.  This gives
\begin{equation}
\label{eq: conditional expectation}
\begin{split}
\E[\FavP(S_n)]&\leq \E[\sum_{Q\in\cS_{n/2}}\FavP(E_{n,Q})] \\
&=\E[\sum_{Q\in\cD_{n/2}}\textbf{1}_{S_{n/2}}(Q)\FavP(E_{n,Q})] \\
&\overset{(\ref{eq: probability calculation})}{=}L^{-n/2}\sum_{Q\in \cD_{n/2}}\E[\FavP(E_{n,Q})\,|\,Q\in \cS_{n/2}] \\
&\overset{\textup{Theorem } \ref{thm: mainlocal}}{\lesssim} L^{-n/2}\sum_{Q\in \cD_{n/2}}\E[\Fav(E_{n,Q})\,|\,Q\in \cS_{n/2}] \\
&\overset{(\ref{eq: scale CSS})}{=}L^{-n}\sum_{Q\in \cD_{n/2}}\E[\Fav(S_{n/2})] \\
&=\E[\Fav(S_{n/2})].
\end{split}
\end{equation}
This proves the result for all even $n>n_0$.  For odd $n>n_0$, we observe that $S_n\subset S_{n-1}$, hence (\ref{eq: conditional expectation}) implies
\[
\E[\FavP(S_n)]\leq \E[\FavP(S_{n-1})]\lesssim \E[\Fav(S_{(n-1)/2})]=\E[\Fav(S_{\lfloor n/2\rfloor})].
\]
Finally, for $n\leq n_0$, we note that $\FavP(S_n)\leq 1$ and $\Fav(S_{n/2})\geq L^{-n/2}\geq L^{-n_0/2}$.  Since our constant is allowed to depend on $\Phi,I,$ and $L$, and since $n_0$ depends only on these parameters, we may extend the estimate $\E[\FavP(S_n)]\lesssim \E[\Fav(S_{\lfloor n/2\rfloor})]$ to all $n$ by ensuring the implied constant is at least $L^{n_0/2}$.
\end{proof}

\section{Unions of curves}\label{sec-Gfunction}

We now state and prove our main results on unions of curves. We start with a very general result in Theorem \ref{thm-Gfunction} below, then deduce Theorem \ref{thm:unioncircles} as a consequence. Theorem \ref{thm-Gfunction} allows many other applications; as an example, we state and prove a similar result on unions of families of ellipses with slowly varying major and minor axes.

\begin{thm}\label{thm-Gfunction}
Let $\mathcal{U}\subset\R^4$ be an open and bounded set, and let $G: \mathcal{U}\to \R$ be a $C^2$ function. Assume that
$G$ and its first and second derivatives are bounded uniformly on $\mathcal{U}$. Assume further that
\begin{equation}\label{Ggrady}
|\nabla_y G(x,y)|\gtrsim 1 \hbox{ on }\mathcal{U},
\end{equation}
\begin{equation}
\label{jacobian-G}
\left|
\det
\begin{pmatrix}
 0 & G_3 & G_4\\
 G_1& G_{31} & G_{41}\\
 G_2 & G_{32} & G_{42}
\end{pmatrix}
\right|\approx 1 \hbox{ on }\mathcal{U},
\end{equation}
where $G_i$ denotes the derivative of $G$ in the $i$-th variable, and $G_{ij}$ are the corresponding second-order derivatives.
For each $x=(x_1,x_2)\in\R^2$, let
$$
\Gamma_x=\{y=(y_1,y_2)\in\R^2:\ G(x,y)=0\}.
$$
By (\ref{Ggrady}) and the Implicit Function Theorem, each $\Gamma_x$ is a curve or union of curves in $\R^2$.

Let $E_n\subset\R^2$ be the $n$-th iteration of a self-similar 1-set $E$ of the form defined in Section \ref{subsec-selfsim}. Assume further that $\{(x,y):\ x\in E_n,\ G(x,y)=0\}\subset\mathcal{V}$ for all $n$, where
$\mathcal{V}\subset\mathcal{U}$  is a fixed compact set. 
Define
\begin{equation}\label{eq:gammaunions}
\mathcal{G}(E_n):=\bigcup_{x\in E_n} \Gamma_x \subset\R^2,
\end{equation}
and similarly for $\mathcal{G}(E)$. Then 
\begin{equation}\label{eq:Gupperbound}
|\mathcal{G}(E_n)|\lesssim \Fav(E_{\lfloor n/2\rfloor} ),
\end{equation}
$$
|\mathcal{G}(E)|=\Fav(E)=0.
$$

\end{thm}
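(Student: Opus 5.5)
The plan is to convert the union of curves into a generalized Favard length by a Fubini argument. The nonlinear projection family is read off from $G$ by solving for one of the $y$-variables.

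\textbf{Step 1: from $|\mathcal{G}(E_n)|$ to $\FavP$.} By (\ref{Ggrady}), compactness of $\mathcal{V}$ and a partition of unity, I cover $\mathcal{V}$ by finitely many small boxes $B_x\times B_y$. On each box, after possibly swapping $y_1,y_2$, we have $|G_4|\gtrsim 1$. The Implicit Function Theorem then gives a $C^2$ function $\phi$ with $G(x,y)=0 \iff y_2=\phi(x,y_1)$. For fixed $y_1$ I set
\[
\Phi_{y_1}(x):=\phi(x,y_1),
\]
so that $\alpha=y_1$ is the parameter. Then
\[
\mathcal{G}(E_n)\cap(B_x\text{-part})=\{(y_1,\Phi_{y_1}(x)):x\in E_n\cap B_x\}.
\]
By Fubini, the measure of this piece equals $\int |\Phi_{y_1}(E_n\cap B_x)|\,dy_1=\FavP(E_n\cap B_x)$. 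Summing over the finitely many boxes, with subadditivity, gives $|\mathcal{G}(E_n)|\lesssim \sum \FavP(E_n\cap B_x)$.

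\textbf{Step 2: verify the hypotheses of Theorem \ref{thm: mainlocal}.} Implicit differentiation gives
\[
\partial_{x_i}\phi=-G_i/G_4,\qquad \partial_{y_1}\phi=-G_3/G_4,
\]
and the second derivatives are bounded because $G\in C^2$ with bounded derivatives and $|G_4|\gtrsim 1$. Condition (\ref{seconddv}) and the upper half of (\ref{gradient}) follow directly. The determinant in (\ref{jacobian}) for $\Phi_\alpha=\phi(\cdot,\alpha)$ is the main computation. I expand
\[
\partial_\alpha\partial_i\phi=\partial_{y_1}(-G_i/G_4)+\partial_{y_2}(-G_i/G_4)\,\partial_{y_1}\phi.
\]
Substituting $\partial_{y_1}\phi=-G_3/G_4$ and clearing denominators should produce a multiple $G_4^{-3}$ (or a similar power) of the $3\times3$ determinant in (\ref{jacobian-G}). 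This is the bordered-Hessian identity. Hence (\ref{jacobian-G}) yields (\ref{jacobian}).

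The lower bound $|\nabla_x\phi|\gtrsim1$ is not an explicit assumption. It follows from (\ref{jacobian-G}): if $G_1=G_2=0$, the determinant reduces to $-G_3(\cdot)+G_4(\cdot)$ times $2\times2$ minors whose first column vanishes, so the whole determinant vanishes. More directly, $\det(\nabla\Phi,\partial_\alpha\nabla\Phi)\approx1$ together with bounded $\partial_\alpha\nabla\Phi$ forces $|\nabla\Phi|\gtrsim1$. Getting the exact algebraic identity and the sign bookkeeping right is the step I expect to be the main obstacle, together with handling the possible swap of $y_1$ and $y_2$ across charts.

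\textbf{Step 3: conclude.} With $\Phi$ satisfying the hypotheses of Theorem \ref{thm: mainlocal} on each chart, I repeat the proof of the upper bound in Theorem \ref{thm:comparison}. That is, I apply the local comparison to each copy $E_{m,j}\cap B_x$ with $m=\lfloor n/2\rfloor$. Each copy is a union of $\de^2$-squares inside a $\de$-square, possibly cut by $B_x$. To avoid cutting, I instead choose the partition at the level of $\de$-squares: each square is assigned to one chart containing its $2\de$-neighbourhood, which is possible for $n$ large by a Lebesgue-number argument. Subadditivity and the scaling (\ref{eq: fav linear}) then give $|\mathcal{G}(E_n)|\lesssim \Fav(E_{\lfloor n/2\rfloor})$, which is (\ref{eq:Gupperbound}).

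Finally, $\mathcal{G}(E)\subset\mathcal{G}(E_n)$ for every $n$. Besicovitch's theorem (or the bound (\ref{eq:subexp})) gives $\Fav(E_m)\to0$, hence $\Fav(E)=0$ and $|\mathcal{G}(E)|=0$.
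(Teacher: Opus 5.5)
Your proposal is correct and follows essentially the paper's route. You use implicit-function charts with $\alpha=y_1$ and Fubini to identify each chart's piece of $\mathcal{G}(E_n)$ with a generalized Favard length. You then use the implicit-differentiation identity linking the $2\times 2$ determinant in (\ref{jacobian}) to the bordered determinant in (\ref{jacobian-G}); the factor is indeed $G_4^{-3}$ as you guessed, and the paper's $G_4^{-4}$ is immaterial since $|G_4|\approx 1$. Finally you run the upper-bound argument from the proof of Theorem \ref{thm:comparison}.

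Your Lebesgue-number step usefully tightens a point the paper passes over: the chart's $x$-domain need not contain $E_n$, so the $\de$-squares must not be cut by chart boundaries. However, each $\de$-square must be paired with all of the $O(1)$ charts whose $x$-part contains its neighbourhood, not just one. No single box $B_x\times B_y$ captures the whole curve $\Gamma_x$ in the $y$-variable.
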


\begin{proof}[Proof of Theorem \ref{thm-Gfunction}]
The proof is by reducing to an application of Theorem \ref{thm:comparison}. We first split $\mathcal{U}$ into smaller sets where the theorem can be applied. 
By \eqref{Ggrady}, either $G_4(x,y)$ or $G_3(x,y)$ is bounded away from $0$ for each $(x,y)$ in $\mathcal{U}$, where $G_i$ denotes the partial derivative of $G$ in $y_i$.

Assume first that
\begin{equation}\label{eq:partial4}
|G_4(x,y)|\gtrsim 1
\end{equation}
holds at some point $(x,y)\in\mathcal{U}$.
 Then, we can solve locally for $y_2$ as a function of the remaining variables, so that there exists a $C^2$ function $y_2=\Phi(y_1,x)$ 
such that
\begin{equation}\label{Gimplicit}
G(x_1,x_2,y_1,\Phi(y_1,x))=0.
\end{equation}


Furthermore, $\Phi$ satisfies the assumptions \eqref{gradient} and \eqref{seconddv} of Theorem \ref{thm: mainlocal}, with $\alpha=y_1$. Indeed,
\[
\partial_i\Phi=-\frac{G_i}{G_4}, \qquad i=1,2.
\]
Since the derivatives of $G$ are uniformly bounded and $|G_4|\gtrsim1$ on the neighbourhood under consideration, we have
\[
\|\nabla_x\Phi\|\lesssim 1.
\]
On the other hand, expanding the determinant in \eqref{jacobian-G} shows that
\[
1\lesssim
\left|
\det
\begin{pmatrix}
0&G_3&G_4\\
G_1&G_{31}&G_{41}\\
G_2&G_{32}&G_{42}
\end{pmatrix}
\right|
\lesssim
\sqrt{G_1^2+G_2^2},
\]
where the second inequality follows from the boundedness of the first and second derivatives of $G$. Hence
\[
\sqrt{G_1^2+G_2^2}\gtrsim 1.
\]
Since $|G_4|\approx1$, it follows that
\[
\|\nabla_x\Phi\|
=
\frac{\sqrt{G_1^2+G_2^2}}{|G_4|}
\approx1,
\]
proving \eqref{gradient}. Condition \eqref{seconddv} follows similarly from the implicit differentiation formulas for the second derivatives of $\Phi$, using the boundedness of the first and second derivatives of $G$ together with $|G_4|\gtrsim1$.

To verify (2.3), differentiate (6.6) with respect to $x_i$, $i=1,2$, to obtain
\[
\partial_i \Phi=-\frac{G_i}{G_4}.
\]
Differentiating once more with respect to $y_1$ and using
\[
\partial_{y_1}\Phi=-\frac{G_3}{G_4},
\]
a direct computation yields
\[
\det
\begin{pmatrix}
\partial_1\Phi & \partial_{y_1}\partial_1\Phi\\
\partial_2\Phi & \partial_{y_1}\partial_2\Phi
\end{pmatrix}
=
\frac{1}{G_4^4}
\det
\begin{pmatrix}
0 & G_3 & G_4\\
G_1 & G_{31} & G_{41}\\
G_2 & G_{32} & G_{42}
\end{pmatrix}.
\]
Since (6.5) implies $|G_4|\gtrsim 1$ on the neighbourhood under consideration, condition (2.3) is equivalent to (6.2).

Fix the compact set $\mathcal{V}\subset\mathcal{U}$. Let $(x,y)\in\mathcal{V}$. By (\ref{Ggrady}), at least one of $|G_3(x,y)|\gtrsim 1 $ and $|G_4(x,y)|\gtrsim 1$ holds. Assume without loss of generality that we have the latter. This means that there is some neighbourhood $\mathcal{U}_{x,y}\subset\mathcal{U}$ where we can solve (\ref{Gimplicit}) for $y_2=\Phi(y_1,x)$ as above. Making this neighbourhood smaller if necessary, while still keeping $(x,y)\in \mathcal{U}_{x,y}$, we may assume that $\mathcal{U}_{x,y}=U\times I\times J$, where $x\in U\subset\R^2$, $y_1\in I\subset\R$, and $y_2\in J\subset\R$. 

Since $\mathcal{V}$ is compact, we may cover it by $O(1)$ such neighbourhoods $\mathcal{U}_j$, defined either exactly as above or with the $y_1$ and $y_2$ variables interchanged. Let $E_n\subset\R^2$ be as in the assumptions of the theorem, and let
$$
\mathcal{G}_j(E_n)=\bigcup_{x\in E_n} \{y:\ (x,y)\in \mathcal{U}_j,\ G(x,y)=0\},
$$
then
$$
\mathcal{G}(E_n)\subset\bigcup_j \mathcal{G}_j(E_n).
$$
Hence, to estimate $|\mathcal{G}(E_n)|$ from either above or below, it suffices to give  similar bounds for each $|\mathcal{G}_j(E_n)|$. 

We now fix $j$. Without loss of generality, we assume that (\ref{eq:partial4}) holds on $\mathcal{U}_j$, and that each $\Gamma_x$ is given by $y_2=\Phi(y_1,x)$ with $\Phi$ defined as above. We further have $\mathcal{U}_{j}=U\times I\times J$, where $U\subset\R^2$ is an open set, and $I,J\subset\R$ are intervals. 
This places us in the set-up of Theorem \ref{thm:comparison}. We write
\begin{align*}
|\mathcal{G}_j(E_n)|&=\int_{\mathcal{G}_j(E_n) }1 dy\\
&= \int_I \Big|\Big\{y_2:\ y_2=\Phi_{y_1}(x)\hbox{ for some }x\in E_n\Big\}\Big|dy_1
\\
&= \int_I \Big|\Phi_{y_1}(E_n)\Big|dy_1
\\
&=\FavP(E_n).
\end{align*}
Applying Theorem \ref{thm:comparison}, and then summing in $j$, we get
$$
|\mathcal{G}(E_n)|\lesssim \FavP(E_n)\lesssim \Fav(E_{\lfloor n/2\rfloor}).
$$
Furthermore, taking the limit $n\to\infty$ (or alternatively, applying the same argument as above to $E$), we get
$$
|\mathcal{G}(E)|=\FavP(E)=0.
$$

\end{proof}

As an application, we observe that Theorem \ref{thm:unioncircles} follows as an immediate consequence of Theorem \ref{thm-Gfunction}. 

\begin{proof}[Proof of Theorem \ref{thm:unioncircles}]
Let $0<a<b<\infty$ and $0\leq c<1$ be fixed real numbers. 
Let $G(x,y)=|x-y|^2 -r^2(x)$, where $r: \R^2\to [a,b]$ is a $C^2$ function. Then $\Gamma_x$ is a circle in the $y$-variables, centered at $x$ and of radius $r(x)$, and the set $\mathcal{G}$ in (\ref{eq:gammaunions}) is a union of such circles. We further have
$$
\nabla_y G=2(y-x),
$$
so that (\ref{Ggrady}) holds on any bounded subset $\mathcal{U}\subset\R^4$. The determinant in (\ref{jacobian-G}) is equal to
$$
8r\Big( (\nabla r)\cdot(x-y)-r\Big).
$$
If we assume that
\begin{equation}\label{grad-r-small}
|\nabla r|\leq c,
\end{equation}
then $|(\nabla r)\cdot(x-y)|\leq |\nabla r|\,|x-y|\leq cr$ on $\Gamma_x$. Hence we may choose the set $\mathcal{U}$ containing $\mathcal{G}$ so that (\ref{jacobian-G}) holds.
\end{proof}

With $E_n$ and $E$ defined as above, and assuming that (\ref{grad-r-small}) holds, 
Theorem \ref{thm-Gfunction} gives a bound on the size of a union of circles of radius $r(x)$ centered at points $x$ of $E_n$ and $E$. 
This is an important case that has been of interest in the literature. However, our
framework also covers more general cases, for instance ellipses with mildly varying parameters as shown below.

\begin{proof}[Application of Theorem \ref{thm-Gfunction}: Ellipses]
Let $0<c<C<\infty$. 
Let 
$$
G(x,y) = \frac{(x_1-y_1)^2}{a(x)^2} + \frac{(x_2-y_2)^2}{b(x)^2} -1,
$$
where $a,b: \R^2\to [c,C]$ are $C^2$ functions with all partial derivatives bounded strictly above by some constant $A\in[0,1/2)$ in absolute value. 

Then $\Gamma_x$ is an ellipse in the $y$-variables, centered at $x$ and of horizontal radius $a(x)$ and vertical radius $ b(x)$, and the set $\mathcal{G}$ in (\ref{eq:gammaunions}) is a union of such ellipses.

To apply Theorem \ref{thm-Gfunction}, it suffices to show that (\ref{Ggrady}) and (\ref{jacobian-G}) hold on the set $G(x,y)=0$; by continuity, this extends to some open neighbourhood of that set.
We have
$$
\nabla_y G= -2 \left( \frac{ (x_1-y_1)}{a(x)^2}, \,\, \frac{(x_2-y_2)}{b(x)^2}  \right),
$$
so that (\ref{Ggrady}) holds on any bounded subset $\mathcal{U}\subset\R^4$. 

Next, we consider the determinant in (\ref{jacobian-G}).
Compute 
$$G_1 = 2(x_1-y_1) \frac{ \left[ a(x) - (x_1-y_1) a_1(x)\right]  }{a(x)^3},$$

$$G_2 = 2(x_2-y_2) \frac{ \left[ b(x) - (x_2-y_2) b_2(x)\right]  }{b(x)^3},$$

$$G_3 =  -\frac{2(x_1-y_1)  }{a(x)^2} \,\,\, \text{ and } \,\,\,
G_4 =  -\frac{ 2(x_2-y_2)  }{b(x)^2} ,$$

 Further, $G_{32}=G_{41}=0$, and 
  $$G_{31} =  \frac{-2(a(x) - 2(x_1-y_1)a_1(x))  }{a(x)^3},$$
and
$$G_{42} =  \frac{-2(b(x)-2(x_2-y_2)b_2(x))  }{b(x)^3}.$$

Our bound on the partial derivatives of $a$ and $b$ implies that, on the set $G(x,y)=0$,
\begin{equation}\label{eq-ellipses1}
a- (x_1-y_1)a_1\geq a-A|x_1-y_1|\geq (1-A)a> 0,
\end{equation}
\begin{equation}\label{eq-ellipses2}
a- 2(x_1-y_1)a_1\geq a-2A|x_1-y_1|\geq (1-2A)a > 0,
\end{equation}
and similarly for the second coordinate. 

At least one of $|x_1-y_1|\geq a^2/2$ or $|x_2-y_2|\geq b^2/2$ holds.  
Assume the former. Then $|\nabla_y G|\geq |G_3|\gtrsim 1$.
We now check the Jacobian condition. 
The determinant in \ref{jacobian-G} equals 
$$
\det
\begin{pmatrix}
 0 & G_3 & G_4\\
 G_1& G_{31} & G_{41}\\
 G_2 & G_{32} & G_{42}
\end{pmatrix}
=
\det
\begin{pmatrix}
 0 & G_3 & G_4\\
 G_1& G_{31} & 0\\
 G_2 & 0 & G_{42}
\end{pmatrix}
= - [G_1G_3 G_{42} + G_2G_4G_{31}]. 
$$
It follows from (\ref{eq-ellipses1}), (\ref{eq-ellipses2}), and similar inequalities in the second variable, that
\begin{equation}\label{eq-ellipses3}
G_1 G_3\leq 0, \ \ G_2G_4\leq 0, \ \ G_{31}\leq 0,\ \ G_{42}\leq 0.
\end{equation}
Assuming again that $|x_1-y_1|\geq a^2/2$, we additionally have
$$
G_1G_3G_{42} \gtrsim 1,
$$
so that $G_1G_3 G_{42} + G_2G_4G_{31}\gtrsim 1$ as required. The case $|x_2-y_2|\geq b^2/2$ is similar, with $G_2G_4G_{31}\gtrsim 1$ instead.

With $E_n$ and $E$ defined as above,
Theorem \ref{thm-Gfunction} now yields an analogous conclusion to that of the circle application above for ellipses. 
\end{proof}

\section*{Acknowledgements}
I.{\L}. is supported in part by NSERC Discovery Grant GR010103.
A.M. is supported by an AMS-Simons Research Enhancement Grant for Primarily Undergraduate Institution Faculty.
K.T. is supported in part by the Simons Foundation Grant GR137264.

%

\bibliography{refs}
\bibliographystyle{abbrv}

\end{document}